\title{The complexity of classification problems for models of arithmetic}
\author{Samuel Coskey}
\author{Roman Kossak}
\def\LO{{\sf LO}}
\def\sset{{\sf set}}
\def\srec{{\sf rec}}
\def\sfg{{\sf fg}}
\def\iso{\cong}
\def\oiso{\mathord{\cong}}
\def\olt{\mathord{<}}
\def\oeq{\mathord{=}}
\def\defeq{\mathrel{\mathpalette{\vcenter{\hbox{$:$}}}=}}
\begin{document}
\begin{abstract}
  We observe that the classification problem for countable models of
  arithmetic is Borel complete.  On the other hand, the classification
  problems for finitely generated models of arithmetic and for
  recursively saturated models of arithmetic are Borel; we investigate
  the precise complexity of each of these.  Finally, we show that the
  classification problem for pairs of recursively saturated models and
  for automorphisms of a fixed recursively saturated model are Borel
  complete.
\end{abstract}
\maketitle

\section{Introduction}

It is well-known that models of Peano Arithmetic (\PA) are highly
unclassifiable. In this note, we aim to make this statement more
precise by showing that many natural classification problems related
to countable nonstandard models are of high complexity according to
the descriptive set theory of equivalence relations. Our main tool
will be Gaifman's minimal types of \cite{gai76}, which provide a
method of constructing models of \PA\ ``along'' linear orders. The
book \cite{ksbook} provides all of the necessary details of this
method, as well as the background concerning \rs\ models. The
model-theoretic arguments that we shall use are standard. We will try
to give enough details in our arguments so that readers unfamiliar
with models of arithmetic can understand the most important special
cases.

In order to rigorously discuss the complexity of classification
problems, we must use the language of \emph{Borel equivalence
  relations}, an area of descriptive set theory.  This subject was
initiated in \cite{friedmanstanley} and \cite{hjorthkechris}, and a
good introduction can be found in \cite{kanovei}. To explain how it
applies, we will demonstrate how each of the classification problems
which we shall consider (along with a great many others) can be
identified with an equivalence relation on some standard Borel
space. Recall that a \emph{standard Borel space} is complete separable
metric space equipped just with its $\sigma$-algebra of Borel sets.
The most important example for us is the following. If $\lan$ is a
countable relational language and $\Theta$ is an $\lan$-theory (or
more generally, a sentence of the infinitary language
$\lan_{\om_1,\om}$ in which infinite conjunctions and disjunctions are
allowed), then the set
\[X_\Theta\defeq
  \set{M:\textrm{the domain of $M$ is $\omega$ and $M\models\Theta$}}
\]
is called the \emph{space of countable models of
  $\Theta$}.\footnote{More precisely, if $a(R)$ denotes the arity of
  $R\in\lan$, then $X_\Theta$ can be regarded as a Borel subset of the
  space $\prod_{R\in\lan}\mathcal{P}(\omega^{a(R)})$ of all
  $\lan$-structures with domain $\omega$.  It follows from the general
  theory that $X_\Theta$ is a standard Borel space in its own right.}
Studying the classification problem for countable $\Theta$-models now
amounts to studying the \emph{isomorphism equivalence relation}
$\oiso_\Theta$ on $X_\Theta$.

Now, if $E,F$ are (not necessarily Borel) equivalence relations on the
standard Borel spaces $X,Y$, then we say that $E$ is \emph{Borel
  reducible} to $F$ (written $E\leq_BF$) iff there exists a Borel
function $f:X\rightarrow Y$ such that
\[x\mathrel{E}x'\Iff f(x)\mathrel{F}f(x')\;.
\]
The function $f$ is said to be a \emph{Borel reduction} from $E$ to
$F$.  Informally, we take $E\leq_BF$ to imply that the classification
problem for elements of $Y$ up to $F$ is at least as hard as the
classification problem for elements of $X$ up to $E$.

\begin{defn}
  Let $\lan$ be a countable language and $\Theta$ a sentence of
  $\lan_{\om_1,\om}$.  The class of $\Theta$-models is said to be
  \emph{Borel complete} iff for any $\lan'$ and any sentence $\Theta'$
  of $\lan'_{\omega_1,\omega}$, we have
  $\oiso_{\Theta'}\leq_B\oiso_{\Theta}$.
\end{defn}

We remark that the terminology is unfortunately misleading, since if
$\oiso_\Theta$ is the isomorphism relation for a Borel complete class,
then $\oiso_\Theta$ is a properly analytic set pairs.  The Borel
complete equivalence relations form a single bireducibility class
which is of course quite high in the $\leq_B$ hierarchy.  Many
familiar classes are known to be Borel complete. For some examples, it
is shown in \cite{friedmanstanley} that the class of countable groups,
of countable connected graphs, and of countable linear orders are all
Borel complete.

In the next section, we shall show that the classification problem for
countable models of arithmetic is also Borel complete.  Afterwards, we
turn our attention to the classification problems for various
important collections of countable models of \PA.  In the third
section, we consider the class of finitely generated models, and in
the fourth the recursively saturated models.  The classification
problem for each of these classes of models is Borel.

In the final two sections, we consider the isomorphism problem for
particular expansions of models of \PA.  In the fifth section, we
shall show that the classification problem for elementary pairs of
recursively saturated models is Borel complete.  As an application, we
show in the last section that the conjugacy problem for automorphisms
of a recursively saturated model is also Borel complete.

We would like to thank Jim Schmerl for his careful reading of the
preliminary version of this paper. Jim caught a serious error and
replaced it with an interesting result (contradicting our erroneous
claim) which is Theorem \ref{fg} below. The theorem and its proof are
presented here with his kind permission.

\section{Canonical $I$-models}

In this section, we will outline how Gaifman used minimal types to
build canonical models of \PA\ along a given linear order (refer to
Section~3.3 of \cite{ksbook} for the full details).  From the details
of this construction, we shall see that the isomorphism relation for
countable linear orders is Borel reducible to the that for countable
models of \PA\, and hence that the class of countable models of \PA\
is Borel complete.  Lastly, we will give some additional facts
concerning these canonical models that will be useful in later
sections.

Minimal types were originally defined by Gaifmain, and they are
so-named because he used them to obtain minimal elementary extensions.
We omit the original definition, but use instead the characterization
that $p(x)$ is \emph{minimal} iff it satisfies the following two
properties:
\begin{enumerate}
\item \emph{unbounded}: $(t<x)\in p(x)$ for each closed Skolem term
  $t$, and
\item \emph{indiscernible}: for every model $M$, and all sequences
  $a_1<\cdots<a_n$ and $b_1<\ldots<b_n$ of realizations $p(x)$ in $M$, we
  have $(M,\bar a)\equiv (M,\bar b)$.
\end{enumerate}
It is not difficult to construct minimal types by repeatedly applying
Ramsey's theorem, formalized in \PA.

We now show how to to build, given a linear order $I$ and a completion
$T$ of \PA, the \emph{canonical $I$-model} of $T$, which we shall
denote $M_T(I)$.  First, fix a minimal type $p(x)$.  There are $\cont$
many such types and it is not important which one we pick, but to make
our constructions parameter-free, we can always choose one which is
uniformly arithmetic in $T$. Next, form the type
\[\Delta(x_i)_{i\in I}\defeq\left(\bigcup_{i\in I}p(x_i)\right)
\cup\set{x_i<x_j: i,j\in I\land i<j}\;,
\]
and let $M_T(I)$ be the Skolem closure of a sequence realizing
$\Delta(x_i)_{i\in I}$.  Now, the key point is that in fact, the
ordertype of $I$ is determined by $M_T(I)$.  (More specifically, by
Theorem~3.3.5 of \cite{ksbook}, the ordertype of $I$ can be recovered
from the ordertype of the set of \emph{gaps} in $M_T(I)$.)  In
particular, we have that if $(I,<)$ and $(J,<)$ are linear orders then
\[(I,<)\cong (J,<)\Iff M_T(I)\cong M_T(J)\;.
\]

Although this works for linear orders of any cardinality, it is easy
to see that for countable $I$, the construction of $M_T(I)$ is
arithmetic in $T$ and $I$.  In particular, there exists a Borel
function $f$ from the space of countable linear orders to the space of
countable $T$-models such that whenever $x=(\omega,\mathord{<})$ is of
ordertype $I$ then $f(x)$ is of isomorphism type $M_T(I)$.

We have established the following result.  Let $\oiso_\LO$ denote the
isomorphism equivalence relation on the space of countable linear
orders and $\oiso_T$ the isomorphism equivalence relation on the space
of countable $T$-models.

\begin{thm}
  \label{first}
  There exists a Borel reduction from $\oiso_\LO$ to $\oiso_T$ which
  sends a linear order $I$ to a canonical $I$-model of $T$. In
  particular, $\oiso_T$ is Borel complete.
\end{thm}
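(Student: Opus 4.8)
The plan is to assemble the facts recalled above into a direct verification that the assignment $I\mapsto M_T(I)$ is a Borel reduction; no new model theory is required beyond what has already been cited. First I would fix, once and for all, a single minimal type $p(x)$ that is arithmetic in $T$ --- such a type exists by the Ramsey-theoretic construction mentioned above --- so that the entire construction of $M_T(I)$ is parameter-free relative to $T$. Given a countable linear order presented as a relation $\mathord{<}$ on $\omega$ of ordertype $I$, one forms the type $\Delta(x_i)_{i\in I}$ and lets $M_T(I)$ be the Skolem closure of a sequence realizing it. I would note that, by the indiscernibility clause in the definition of a minimal type, the isomorphism type of $M_T(I)$ is independent of the choice of realizing sequence, so the construction depends only on the ordertype $I$.

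Next I would check the two directions of the required biconditional. The implication $(I,<)\cong(J,<)\implies M_T(I)\cong M_T(J)$ is immediate: an order isomorphism $I\to J$ carries a sequence realizing $\Delta(x_i)_{i\in I}$ to one realizing $\Delta(x_j)_{j\in J}$, whence the corresponding Skolem closures are isomorphic. For the converse I would invoke Theorem~3.3.5 of \cite{ksbook}: the ordertype of $I$ is recovered from $M_T(I)$, since $I$ is (canonically isomorphic to) the ordered set of gaps of $M_T(I)$; hence $M_T(I)\cong M_T(J)$ forces $(I,<)\cong(J,<)$. I expect this converse to be the one genuinely substantive point, as it is the only step that exploits the fine structure of models built from minimal types rather than routine bookkeeping --- but in the present note it reduces to the cited theorem.

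Finally I would address definability and conclude. For countable $I$ the type $\Delta(x_i)_{i\in I}$ is arithmetic in $T$ together with the presentation of $I$, and a realizing sequence and its Skolem closure can be produced by a sufficiently uniform recursion; reading the output as a model with universe $\omega$ yields a Borel map $f$ from the space of countable linear orders to the space of countable $T$-models such that $f(x)$ has isomorphism type $M_T(I)$ whenever $x$ has ordertype $I$. Together with the biconditional just established, $f$ witnesses $\oiso_\LO\leq_B\oiso_T$. Since the class of countable linear orders is Borel complete \cite{friedmanstanley}, composing reductions gives $\oiso_{\Theta'}\leq_B\oiso_\LO\leq_B\oiso_T$ for every language $\lan'$ and every sentence $\Theta'$ of $\lan'_{\omega_1,\omega}$; thus $\oiso_T$ is Borel complete.
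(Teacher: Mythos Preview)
Your proposal is correct and follows essentially the same route as the paper: the paper's proof is precisely the discussion preceding the theorem statement, and you have reproduced it faithfully, merely spelling out a few points (well-definedness via indiscernibility, the trivial forward implication, and the final composition with the Friedman--Stanley result) that the paper leaves implicit. The substantive step in both is the appeal to Theorem~3.3.5 of \cite{ksbook} to recover $I$ from the gaps of $M_T(I)$.
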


We remark that for $I$ infinite, the model $M_T(I)$ is not finitely
generated. It follows from the results of the next section that the
use of non-finitely generated models is essential for
Theorem~\ref{first}.

In later sections, we shall require another important feature of
canonical $I$-models.  First, recall that an extension $K\prec M$ is
said to be an \emph{end extension}, written $K\ee M$, iff $K$ is an
initial segment of $M$.  Next, for a structure $M$, we let $\Def(M)$
denote collection of all subsets of $M$ which are definable from
parameters in $M$. An extension $K\prec M$ is said to be
\emph{conservative} iff for every $X\in\Def(M)$, we have that $X\cap
K\in\Def(K)$.  The MacDowell-Specker Theorem (see for instance
Theorem~2.2.8 of \cite{ksbook}) states that any model of \PA\ has a
conservative elementary end extension.  The following classical
result, due to Gaifman, can again be found in more detail in
Section~3.3 of \cite{ksbook}.

\begin{thm}
  \label{cons}
  Let $M_T(I)$ be the canonical $I$ model and suppose that $J$ is a
  proper initial segment of $I$. Then $M_T(I)$ is a conservative
  elementary end extension of $M_T(J)$.
\end{thm}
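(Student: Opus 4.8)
The plan is to realize $M_T(J)$ as a Skolem submodel of $M_T(I)$ and then establish the three required properties---elementarity, end extension, and conservativity---with the genuine content isolated in Gaifman's lemma on one-generated extensions.

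To begin, fix an increasing sequence $(x_i)_{i\in I}$ realizing $\Delta(x_i)_{i\in I}$, so that $M_T(I)=\mathrm{Sk}(\set{x_i:i\in I})$. Since $p$ is indiscernible and $\Delta$ mentions only the types $p(x_i)$ and the ordering, the subsequence $(x_i)_{i\in J}$ realizes $\Delta(x_i)_{i\in J}$; hence $\mathrm{Sk}(\set{x_i:i\in J})$ is a copy of $M_T(J)$, and I identify the two. Because $T\supseteq\PA$ has definable Skolem functions, every Skolem submodel of a model of $T$ is elementary, which immediately gives $M_T(J)\prec M_T(I)$. I would also extract from Theorem~3.3.5 of \cite{ksbook} the one geometric fact needed: distinct generators $x_i$ lie in distinct gaps of $M_T(I)$, ordered along $I$, so that any element of $\mathrm{Sk}(\set{x_i:i\in K})$ has gap no larger than that of the generator of greatest index occurring in it. In particular, since $J$ is an initial segment of $I$, every element of $M_T(J)$ lies strictly below $x_d$ for each $d\in I\setminus J$; equivalently, each such $x_d$ is cofinal over $M_T(J)$ and, more generally, over the Skolem hull of any set of generators all of strictly smaller index.

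The engine is the classical result of Gaifman (Section~3.3 of \cite{ksbook}): if $N\models T$ and $a$ realizes over $N$ an unbounded extension of the minimal type $p$, then $N[a]$ is a conservative elementary end extension of $N$. I would apply this along the generators outside $J$. Fix a finite $F=\set{f_1<\cdots<f_n}\subseteq I\setminus J$; since $J$ is initial, each $f_k$ exceeds every element of $J$ and each $f_j$ with $j<k$, so by the gap observation $x_{f_k}$ is cofinal over $N_k:=\mathrm{Sk}(\set{x_i:i\in J}\cup\set{x_{f_1},\dots,x_{f_{k-1}}})$, and its type over $N_k$ is therefore an unbounded extension of $p$. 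By Gaifman's lemma, $N_{k+1}=N_k[x_{f_k}]$ is a conservative elementary end extension of $N_k$; composing these finitely many steps, and using that a composition of conservative elementary end extensions is again one, yields that $M_T(J\cup F)=\mathrm{Sk}(\set{x_i:i\in J}\cup\set{x_i:i\in F})$ is a conservative elementary end extension of $M_T(J)$.

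It remains to pass to the limit, which is routine. Any element or finite parameter tuple of $M_T(I)$ already lies in some $M_T(J\cup F)$ with $F\subseteq I\setminus J$ finite. For the end extension property: if $a\in M_T(I)$ and $a<b$ for some $b\in M_T(J)$, choose such an $F$ with $a\in M_T(J\cup F)$; then $M_T(J)\ee M_T(J\cup F)$ together with $b\in M_T(J)$ forces $a\in M_T(J)$. For conservativity: given $X=\set{c\in M_T(I):M_T(I)\models\varphi(c,\bar a)}$, pick $F$ with $\bar a\in M_T(J\cup F)$; elementarity gives $X\cap M_T(J\cup F)=\set{c\in M_T(J\cup F):M_T(J\cup F)\models\varphi(c,\bar a)}\in\Def(M_T(J\cup F))$, and then conservativity of $M_T(J)\prec M_T(J\cup F)$ gives $X\cap M_T(J)=(X\cap M_T(J\cup F))\cap M_T(J)\in\Def(M_T(J))$. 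The only step requiring real care---and the main obstacle---is verifying that each one-generator step genuinely adjoins a cofinal element over the preceding submodel, so that Gaifman's lemma applies; this is precisely where the gap analysis of canonical $I$-models and the indiscernibility built into minimality of $p$ are used.
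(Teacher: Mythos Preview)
The paper does not actually prove this theorem: it is stated as a classical result of Gaifman and the reader is referred to Section~3.3 of \cite{ksbook} for details. Your proposal is a faithful outline of the standard argument one finds there---iterate Gaifman's one-step lemma (adjoining a realization of the unbounded extension of a minimal type over $N$ yields a conservative elementary end extension $N[a]$) along the generators indexed by $I\setminus J$, and then pass to the directed union over finite $F\subseteq I\setminus J$.

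One small caution on logical dependence: you appeal to Theorem~3.3.5 of \cite{ksbook} (the gap description of $M_T(I)$) to secure the cofinality of each newly adjoined generator $x_{f_k}$ over $N_k$, but in the book's development that gap theorem is a \emph{consequence} of the end-extension properties you are in the process of proving, so invoking it here risks circularity. The cleaner route---which you in fact gesture at in your final sentence---is to get cofinality directly from the properties of minimal types: minimality implies that the type is an \emph{end-extension type}, so that whenever $a_1<\cdots<a_n$ realize $p$, the element $a_n$ automatically realizes the unbounded extension of $p$ over $\mathrm{Sk}(a_1,\ldots,a_{n-1})$ (equivalently, $a_n$ lies above every Skolem term in the earlier generators). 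With that in hand, your inductive chain $N_1\prec_e^{\mathrm{cons}}\cdots\prec_e^{\mathrm{cons}} N_{n+1}=M_T(J\cup F)$ goes through, and the limit argument you give for arbitrary $I$ is correct.
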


For further applications, let us note that the construction of
$M_T(I)$ works in a much more general context.  For
$\lan\supset\{\mathord{+},\mathord{\times},0,1\}$ a \ct\ language, we
let $\PA(\lan)$ be the theory obtained from \PA\ by adding instances
of the induction schema for all $\lan$-formulas.  We will use the
notation $\SPA$ as a stand-in for $\PA(\lan)$ for any countable
$\lan$.  The construction of canonical $I$-models can also be carried
out for models of $\SPA$, and everything which has been said in this
section holds in this general situation as well.

\section{Finitely generated models}

While each $\oiso_T$ is Borel complete, the isomorphism equivalence
relation $\oiso_T^\sfg$ on the space of \emph{finitely generated}
models of $T$ is Borel.  In this section, we shall see that according
to the $\leq_B$ hierarchy, $\oiso_T^\sfg$ lies among the countable
Borel equivalence relations.  After introducing this important class,
we present a theorem of Schmerl which helps us further understand the
complexity of $\oiso_T^\sfg$.

For each \ar\ formula $\vp(x,\bar y)$ there is a corresponding Skolem
term $t_\vp(\bar y)$, which is defined to be $\min\{x:\vp(x,\bar y)\}$
if this set is nonempty, and 0 otherwise. If $M$ is a model of \PA,
then the \emph{Skolem closure} of some $\bar a\in M^n$ is the set
\[\hull(\bar a)=\{t(\bar a): t \textup{ is a \sk}\}\;.
\] $M$ is said to be \emph{finitely generated} iff there is an $\bar
a\in M^n$ such that $M=\hull(\bar a)$. Since it is possible to code a
finite sequence of natural numbers as a single natural number, and
this can be done definably in \PA, we can always suppose that a
finitely generated model is generated by a single element. If $M$ and
$N$ are finitely generated, then $M\cong N$ iff there are $a$ and $b$
such that $M=\hull(a)$, $N=\hull(b)$ and $\tp(a)=\tp(b)$.  It is easy
to see that the condition on the right hand of this equivalence is
Borel (in $M$ and $N$).

The observation that $\oiso_T^\sfg$ is Borel, combined with
Theorem~\ref{first}, already yields an interesting corollary. By
Theorem~2.1.12 of \cite{ksbook}, every \ct\ model $M$ of $T$ has a
finitely generated minimal \ele.  The construction used in the proof
Theorem~2.1.12 of \cite{ksbook} is not canonical, it depends on the
choice of enumeration of the model $M$. The following result shows
that in fact there is no canonical construction.

\begin{cor}
  Let $T$ be a completion of \PA.  Then there is no Borel map taking
  each \ct\ model $M$ of $T$ to a finitely generated minimal \ele\ of
  $M$.
\end{cor}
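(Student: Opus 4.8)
The plan is to derive a contradiction from the existence of such a Borel map by combining it with the Borel reduction from Theorem~\ref{first}. Suppose for contradiction that $g$ is a Borel map sending each countable model $M$ of $T$ to a finitely generated minimal elementary substructure $g(M)\prec M$. First I would recall that $g$ is meant to respect isomorphism in the sense that an isomorphism $M\cong M'$ carries $g(M)$ to a model isomorphic to $g(M')$ — indeed, since $g(M)$ is generated by a single element, say $g(M)=\hull(a_M)$, the isomorphism type of $g(M)$ is just the type $\tp(a_M)$ computed in $T$, and any Borel assignment of such a type must be isomorphism-invariant because isomorphic models have isomorphic (hence type-equal) finitely generated minimal substructures. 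Consequently $M\mapsto g(M)$ descends to a Borel reduction from $\oiso_T$ to $\oiso_T^\sfg$: indeed if $M\cong M'$ then $g(M)\cong g(M')$, and conversely if $g(M)\cong g(M')$ then... wait, that direction fails, so $g$ need not itself be a reduction.

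So instead I would argue as follows. Compose the reduction $f\colon\oiso_\LO\leq_B\oiso_T$ of Theorem~\ref{first} with $g$ to obtain a Borel map $h=g\circ f$ from countable linear orders to finitely generated models of $T$. Now $h$ is isomorphism-invariant: if $I\cong J$ then $M_T(I)\cong M_T(J)$, so their canonically-chosen finitely generated minimal substructures are isomorphic. Thus $h$ induces a Borel map from isomorphism classes of countable linear orders to isomorphism classes of finitely generated $T$-models. The key point is that the target, $\oiso_T^\sfg$, is Borel — in fact, by the discussion above, it is classified by the Borel invariant $\tp(a)\in S_1(T)$, so there are only countably many... no: there are continuum-many complete types, but the quotient $X_T^\sfg/{\oiso}$ injects Borel-measurably into the standard Borel space $S_1(T)$ of complete $1$-types. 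Hence the induced map gives a Borel-measurable assignment of a complete type $\tau(I)\in S_1(T)$ to each countable linear order, constant on isomorphism classes. The hard part will be extracting the contradiction from this: one wants to say that $\oiso_\LO$, being Borel complete and hence not smooth (it does not admit a Borel assignment of invariants in a standard Borel space), cannot map this way — but the map $\tau$ is not a priori injective on classes, so non-smoothness alone is not enough.

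The resolution, and the real content of the argument, is that $\tau(I)$ must in fact \emph{determine} the isomorphism type of $I$, contradicting non-smoothness. Here I would use Theorem~\ref{cons}: since $g(M_T(I))$ is a minimal elementary substructure it is, up to isomorphism, of the form $M_T(K)$ for some initial-segment-like $K$ — more carefully, a finitely generated elementary substructure of $M_T(I)$ is $\hull(a)$ for a single $a$, and by minimality together with the gap analysis of Theorem~3.3.5 of \cite{ksbook}, the isomorphism type of $\hull(a)$ inside $M_T(I)$ is governed by where $a$ sits relative to the gaps coded by $I$; but every finitely generated model has only finitely many gaps, so $g(M_T(I))$ has bounded gap-structure and its isomorphism type lies in a fixed countable set $\mathcal{C}$ of isomorphism types independent of $I$. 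Therefore $\tau$ takes only countably many values, so it partitions the countable linear orders into countably many Borel-measurable $\oiso_\LO$-invariant pieces; one of these pieces is non-meager (or conull for a suitable measure), and on it $\tau$ is constant — but $\oiso_\LO$ restricted to any non-meager Borel set is still Borel complete, in particular non-smooth, contradicting the existence of the complete invariant $\tau$ with countable range. This last step — showing the gap-structure of a finitely generated minimal elementary substructure of $M_T(I)$ is bounded independently of $I$, so that $\tau$ has countable range — is the main obstacle, and it is exactly where the structure theory of canonical $I$-models from Section~2 does the work.
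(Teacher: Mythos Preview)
You have misread the statement. The abbreviation \emph{ele} in the corollary stands for \emph{elementary extension}, not elementary substructure: the hypothetical Borel map $f$ sends each countable $M\models T$ to a finitely generated model $M'$ with $M\prec M'$ a \emph{minimal extension}, not to a substructure $g(M)\prec M$. Your entire analysis---types of generators sitting inside $M_T(I)$, gap-structure of substructures of canonical $I$-models, the attempt to bound the range of $\tau$ to a countable set---is built on the wrong picture and does not address the actual claim. (Incidentally, the claim that the possible isomorphism types of $g(M_T(I))$ form a fixed countable set is false even on your reading: there are continuum-many $1$-types over $T$, hence continuum-many finitely generated models.)

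Once the statement is read correctly, the paper's argument is short and uses none of the machinery you reach for. The key point you are missing is a rigidity lemma for minimal extensions: if $M\prec_{\mathrm{end}} M'$ and $N\prec_{\mathrm{end}} N'$ are both minimal and $\phi\colon M'\to N'$ is an isomorphism, then $\phi(M)=N$. This is because $\phi(M)$ and $N$ are both elementary cuts of $N'$, hence comparable as initial segments, and minimality of each extension rules out strict containment in either direction. Consequently $M'\cong N'$ forces $M\cong N$, so $f$ is already a Borel reduction from $\oiso_T$ to $\oiso_T^\sfg$. Composing with the reduction $I\mapsto M_T(I)$ of Theorem~\ref{first} yields $\oiso_\LO\leq_B\oiso_T^\sfg$; but $\oiso_\LO$ is Borel complete (hence properly analytic) while $\oiso_T^\sfg$ is Borel, a contradiction. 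No analysis of gaps, genericity, or Baire category is needed.
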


\begin{proof}
  Suppose that $f$ is such a map. If both extensions $M\ee M'$ and
  $N\ee N'$ are minimal, then any isomorphism between $M'$ and $N'$
  must map $M$ onto $N$. Hence if $M$ and $N$ are nonisomorphic, then
  $M'$ and $N'$ are nonisomorphic. It follows that $f$ is in fact a
  Borel \emph{reduction} from $\oiso_T$ to $\oiso_T^\sfg$. Hence, the
  composition of $f$ with the Borel reduction $(I,\mathord{<})\mapsto
  M_T(I)$ given by Theorem~\ref{first} would yield a Borel reduction
  from $\oiso_\LO$ to $\oiso_T^\sfg$.  But this is impossible, since
  $\oiso_T^\sfg$ is Borel complete, and a Borel complete equivalence
  relation cannot be Borel.
\end{proof}

We next observe that $\oiso_T^\sfg$ has the stronger property that it
is essentially countable.  Here, a Borel equivalence relation $E$ is
called \emph{countable} iff every $E$-class is countable, and $E$ is
called \emph{essentially countable} iff it is Borel bireducible with a
countable Borel equivalence relation.  Let us also say that a class
$\mathcal C$ of countable models is essentially countable iff the
isomorphism equivalence relation $\oiso_C$ on $\mathcal C$ is
essentially countable.  We will need the following characterization
from \cite{hjorthkechris} of the essentially countable classes.

\begin{thm}[Hjorth-Kechris]
  \label{hk}
  Let $\Theta$ be a sentence of $\lan_{\om_1,\om}$.  Then the class of
  models of $\Theta$ is essentially countable iff there is a \ct\
  fragment $F$ of $\lan_{\om_1,\om}$ with $\Theta\in F$ such that for
  every countable $M\models\Theta$ there exists $n\in\omega$ and $\bar
  a\in M^n$ such that $\Th_F(M,\bar a)$ is $\aleph_0$-categorical.
\end{thm}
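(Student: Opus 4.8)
This is a classical theorem of Hjorth and Kechris, and I would prove the two implications by quite different means: the forward direction (existence of such a fragment $F$ implies that $\oiso_\Theta$ is essentially countable) by explicitly producing a countable Borel equivalence relation, and the backward direction by extracting a suitable fragment from a hypothetical reduction to a countable Borel equivalence relation. Since only the forward direction is needed for the applications in this paper, and since it is also the direction with a clean proof, I would present it first and in full.

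For the forward direction, assume $F$ is a countable fragment with $\Theta \in F$ as in the statement; enlarging $F$, I may assume it is closed under the usual syntactic operations. The key observation is that if $M \models \Theta$ and $\bar a \in M^n$ is such that $\Th_F(M,\bar a)$ is $\aleph_0$-categorical, then $M$ is $F$-atomic over $\bar a$ — every complete $F$-type over $\bar a$ realized in $M$ is isolated by a single $F$-formula, by the Ryll--Nardzewski analysis for countable fragments — and hence $M$ realizes only countably many complete $F$-types. Consequently $S(M) \defeq \{\Th_F(M,\bar b) : \bar b \in M^{<\om},\ \Th_F(M,\bar b)\text{ is }\aleph_0\text{-categorical}\}$ is a nonempty countable subset of the standard Borel space $2^F$ of subsets of $F$, and it is a complete invariant for $M$ up to isomorphism: $M \cong N$ implies $S(M)=S(N)$, and if $q \in S(M)\cap S(N)$, say $q = \Th_F(M,\bar b)=\Th_F(N,\bar c)$, then $(M,\bar b)\cong(N,\bar c)$ since $q$ is $\aleph_0$-categorical, so $M\cong N$. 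Moreover, from a single $q\in S(M)$ one recovers $M$ up to isomorphism — the unique countable model $M_q$ of $q$ is produced from $q$ by the standard atomic-model construction, which is Borel in $q$ — and hence recovers all of $S(M)$. Therefore, letting $Z\subseteq 2^F$ be the Borel set of consistent complete $F$-theories $q$ with $\Theta\in q$ and $\aleph_0$-categorical prime model $M_q$, and defining $G$ on $Z$ by $q\mathrel{G}q'$ iff $M_q\cong M_{q'}$, each class $[q]_G = S(M_q)$ is countable, so $G$ is a countable Borel equivalence relation; and the map sending $M$ to $\Th_F(M,\bar b_M)$, where $\bar b_M$ is the lexicographically least finite tuple from $\om$ whose $F$-theory in $M$ is $\aleph_0$-categorical (one exists by hypothesis), is a Borel reduction of $\oiso_\Theta$ to $G$, as the computations above show. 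Hence $\oiso_\Theta$ is essentially countable.

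For the backward direction, suppose $\oiso_\Theta$ is essentially countable, witnessed by a Borel reduction $f$ of $\oiso_\Theta$ to a countable Borel equivalence relation $G$ on a standard Borel space $Z$; by the Feldman--Moore theorem, $G$ is the orbit equivalence relation of a Borel action of a countable group $\Gamma$. The plan is to absorb $f$, the $\Gamma$-action, and the relevant $\oiso_\Theta$-invariant Borel sets into a single countable fragment $F\ni\Theta$ of $\lan_{\om_1,\om}$ — using the Lopez--Escobar theorem identifying invariant Borel sets with the $\lan_{\om_1,\om}$-definable ones — in such a way that the relation $M\cong N$ becomes detectable within $F$; one then argues that, for this $F$, every $M\models\Theta$ must have a tuple $\bar a$ with $\Th_F(M,\bar a)$ $\aleph_0$-categorical, for otherwise $M$ would be so far from $F$-atomic over every tuple that one could build, for each countable set $A$ of reals, a model $M_A\models\Theta$ with $M_A\cong M_B$ iff $A=B$, realizing a Borel reduction of a non-essentially-countable relation (equality of countable sets of reals) into $\oiso_\Theta$ and contradicting essential countability. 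I expect this backward direction to be the real obstacle — both making precise the sense in which a reduction to a countable Borel equivalence relation is ``computed within'' a countable fragment, and carrying out the diagonalization, require care — and here I would lean on the detailed Scott-analysis arguments of \cite{hjorthkechris}. A secondary, purely technical point is verifying that the conditions invoked in the forward direction (``$\Th_F(M,\bar a)$ is $\aleph_0$-categorical'', ``$q\in Z$'', ``$q'\in S(M_q)$'') are Borel, which I would handle via the Borel content of the Ryll--Nardzewski theorem for countable fragments, or, if convenient, by passing to a Borel subspace on which these conditions are manifest.
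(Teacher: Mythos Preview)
The paper does not prove this theorem: it is stated as a result of Hjorth and Kechris with a citation to \cite{hjorthkechris}, and no argument is given. So there is no proof in the paper to compare your proposal against.

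That said, your sketch is a reasonable outline of how the result is actually established. A couple of remarks. First, your labeling of the two directions is reversed relative to the statement as written: the theorem reads ``essentially countable $\Leftrightarrow$ there exists such a fragment $F$'', so the forward direction is the harder one (extracting a fragment from essential countability), not the one you call forward. Both directions are in fact used in the paper: Proposition~\ref{fg_ec} uses the easy direction (fragment $\Rightarrow$ essentially countable), while Theorem~\ref{not_ec} uses the contrapositive of the hard direction. Second, your argument for the easy direction is essentially correct, though one should be a bit careful: the map $M\mapsto\Th_F(M,\bar b_M)$ with $\bar b_M$ the ``lexicographically least'' witnessing tuple requires checking that the set of tuples whose $F$-theory is $\aleph_0$-categorical is Borel in $M$, which follows from the effective Ryll--Nardzewski analysis you allude to but is not entirely trivial. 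For the hard direction, you correctly identify the shape of the argument and correctly defer to \cite{hjorthkechris} for the Scott-analysis details; that is exactly what the paper does for the whole theorem.
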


Many classes of models which are finitely generated in some sense turn
out to be essentially countable.  For instance, the class of finitely
generated groups is essentially countable, as is the class of fields
of finite transcendence degree.

\begin{prop}
  \label{fg_ec}
  $\oiso_T^\sfg$ is essentially countable.
\end{prop}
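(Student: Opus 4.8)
The plan is to verify the Hjorth-Kechris criterion (Theorem~\ref{hk}) directly. We need a countable fragment $F$ of $\lan_{\omega_1,\omega}$ containing (the first-order theory) $T$ such that every finitely generated $M\models T$ becomes $\aleph_0$-categorical over $F$ after naming finitely many parameters. The natural candidate for the witnessing parameter is a generator: if $M$ is finitely generated, pick $a\in M$ with $M=\hull(a)$. The key observation, already essentially recorded in the discussion preceding the proposition, is that for finitely generated models the isomorphism type of $(M,a)$ is completely determined by the first-order type $\tp(a)$ — indeed, if $N=\hull(b)$ and $\tp(a)=\tp(b)$ then the map $t(a)\mapsto t(b)$ (over all Skolem terms $t$) is a well-defined isomorphism. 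So what we want is a fragment $F$ in which $\Th_F(M,a)$ pins down $\tp(a)$ up to first-order equivalence, hence up to isomorphism of the expanded structure.

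Concretely, I would take $F$ to be the smallest countable fragment containing $T$ together with, for a fixed new constant symbol $c$, the single infinitary sentence
\[
\sigma \;\defeq\; (\forall x)\bigvee_{t\text{ a \sk}} x\oeq t(c)\;,
\]
which asserts that $c$ generates the model. (One must check this is legitimate: there are only countably many Skolem terms in one variable, so this is a bona fide sentence of $\lan_{\omega_1,\omega}$, and closing off under the usual operations keeps the fragment countable.) Now suppose $M\models T$ is finitely generated, say $M=\hull(a)$, and consider the expansion $(M,a)$ interpreting $c$ as $a$; then $(M,a)\models\sigma$. I claim $\Th_F(M,a)$ is $\aleph_0$-categorical. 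Given any countable $(N,b)\models\Th_F(M,a)$, the sentence $\sigma$ forces $N=\hull(b)$, and since $\Th_F(M,a)\supseteq T$ and $F$ contains all first-order $\lan$-formulas with the constant $c$ plugged in, we get $\tp(b)=\tp(a)$. By the remark above, $t(a)\mapsto t(b)$ is then an isomorphism $(M,a)\to(N,b)$. Thus all countable models of $\Th_F(M,a)$ are isomorphic, as required, and Theorem~\ref{hk} gives that $\oiso_T^\sfg$ is essentially countable.

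The main point requiring care is that the fragment $F$ must simultaneously contain $T$ (so that models of $\Th_F$ are models of $T$, making Skolem functions available), contain the generation sentence $\sigma$ (so that the parameter is forced to be a generator in any model of the $F$-theory), and contain enough first-order formulas in the constant $c$ to recover $\tp(a)$ — yet still be countable. This is automatic because $\lan$ is countable, there are only countably many Skolem terms, and a countable set of formulas generates a countable fragment; I would state this explicitly but not belabor it. The rest is the routine verification that a type-preserving term map between finitely generated models is an isomorphism, which is already part of the folklore recalled before the statement.
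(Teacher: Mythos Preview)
Your proof is correct and follows essentially the same route as the paper: both invoke the Hjorth--Kechris criterion (Theorem~\ref{hk}) with a fragment containing the infinitary ``$a$ generates the model'' sentence $\forall y\,\bigvee_{t}\,y=t(a)$, and then observe that any countable model of $\Th_F(M,a)$ must be generated by an element of the same first-order type as $a$, hence isomorphic to $(M,a)$. The only cosmetic difference is that you phrase things via an added constant symbol $c$, whereas the paper works directly with the free-variable formula $\forall y\,\bigvee_t y=t(x)$ in the fragment and substitutes $a$; these are equivalent formulations.
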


\begin{proof}
  Let $\Theta$ be the conjunction of the axioms of \PA\ together with
  the sentence
  \[\ex x\fo y\bigvee\{y=t(x):t \textup{ is a \sk}\}\;.
  \]
  If $F$ is any countable fragment of $\lan_{\om_1,\om}$ containing
  $\Theta$, then the sentence
  \[\fo y\bigvee\{y=t(a):t \textup{ is a \sk}\}
  \]
  is in $\Th_F(M, a)$, and the result follows from Theorem~\ref{hk}.
\end{proof}

We now briefly discuss the structure of the countable Borel
equivalence relations.  Here, we will work only on \emph{uncountable}
standard Borel spaces; it is a classical result that there is a unique
such space up to Borel bijections.  By a theorem of Silver, the
\emph{equality} equivalence relation $\oeq_{2^\omega}$ on $2^\omega$
is the least complex countable Borel equivalence relation.  An
equivalence relation $E$ which is Borel reducible to $\oeq_{2^\omega}$
is called \emph{smooth}, or \emph{completely classifiable} because the
Borel reduction gives a system of complete invariants for the
classification problem up to $E$.  The next least complex equivalence
relation is the \emph{almost equality} relation $E_0$ on $2^\omega$
defined by $x\mathrel{E}_0x'$ iff $x(n)=x'(n)$ for all but finitely
many $n$.  By Harrington-Kechris-Louveau \cite{hkl}, a Borel
equivalence relation $E$ is nonsmooth iff $E_0\leq_B E$.

It also turns out that there exists a \emph{universal} countable Borel
equivalence relation, which we denote by $E_\infty$.  For instance,
the class of finitely generated groups lies at the level of
$E_\infty$, as does the class of connected locally finite graphs.  It
seems likely that $\oiso_T^\sfg$ is also bireducible with $E_\infty$,
but we don't know this yet for sure.  We now present an argument of
Schmerl which at least eliminates the possibility that $\oiso_T^\sfg$
is smooth.

\begin{thm}
  \label{fg}
  If $T$ is any completion of \PA, then $E_0$ is Borel reducible to
  $\oiso_T^\sfg$.
\end{thm}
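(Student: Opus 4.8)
The paper already supplies the decisive simplification: since $\oiso_T^\sfg$ is Borel, the theorem quoted above from Harrington--Kechris--Louveau says that $E_0\leq_B\oiso_T^\sfg$ is equivalent to $\oiso_T^\sfg$ being non-smooth, i.e.\ to the countable Borel equivalence relation ``isomorphism of Skolem hulls'' admitting no Borel transversal. Nonetheless, the plan I would follow is to write down an explicit reduction, by running the argument behind Theorem~\ref{first} ``internally'' so that it lands among the finitely generated models -- at the cost that one can only recover an $E_0$-worth of data from the isomorphism type rather than an arbitrary countable linear order. The invariant I would track is the order of gaps: for $M\models T$ let $\mathrm{Gap}(M)$ denote the linear order of its gaps. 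This is a Borel function of $M$ and an isomorphism invariant, and, as recalled in Section~2, for the canonical models it recovers the order type of the index order; for a general finitely generated $M$ it remains an isomorphism invariant even though it need not determine $M$.

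First I would trade $E_0$ for a more visibly order-theoretic relation: a standard growing-marker argument Borel-reduces $E_0$ to the tail-equivalence relation $E_t$ on $2^\omega$ (``$x$ and $y$ have a common tail''), e.g.\ by sending $x$ to the concatenation of the blocks $1^{n+1}\langle x(n)\rangle0^{n+1}$, whose strictly lengthening markers make the block decomposition eventually canonical. It then suffices to Borel-reduce $E_t$ to $\oiso_T^\sfg$. For this I would seek, using two distinct minimal types $p_0,p_1$ as raw material, a Borel assignment $x\mapsto L_x$ of countable linear orders, whose local shape along $\omega$ is governed by the bits of $x$, with the properties that (i) $L_x\cong L_y$ if and only if $x\mathrel{E_t}y$, and (ii) each $L_x$ is realized as $\mathrm{Gap}(M_x)$ for a \emph{finitely generated} model $M_x$ produced canonically from $L_x$. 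The target for (ii) is $M_x=\hull(c_x)$, where the complete type of $c_x$ -- built by a Gaifman-style iteration of Ramsey's theorem -- plants exactly the gaps prescribed by $L_x$; in effect a single generator must code the tower of conservative elementary end extensions furnished by Theorem~\ref{cons}.

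Granting such a construction the argument closes immediately: if $x\mathrel{E_t}y$ then $L_x\cong L_y$, and since $M_x$ depends only canonically on the isomorphism type of $L_x$ we get $M_x\cong M_y$; conversely $M_x\cong M_y$ gives $L_x=\mathrm{Gap}(M_x)\cong\mathrm{Gap}(M_y)=L_y$ and hence $x\mathrel{E_t}y$, so that $x\mapsto M_x$ is a reduction of $E_t$, and composing with the marker reduction $E_0\leq_B E_t$ yields $E_0\leq_B\oiso_T^\sfg$. I expect the real difficulty to be entirely in the construction of $M_x$. As the remark following Theorem~\ref{first} notes, the obvious candidate $M_T(L_x)$ is \emph{not} finitely generated once $L_x$ is infinite, so one must instead manufacture a single-generator model with the prescribed gap structure; and the genuinely delicate point is to verify that the \emph{full} isomorphism type of $\hull(c_x)$, not merely its gap order, depends on $x$ only through its tail class -- that is, that neither the passage to the Skolem hull nor the available reparametrisations of the generator by Skolem terms can identify $M_x$ with some $M_y$ for $x\not\mathrel{E_t}y$. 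This is a model-theoretic verification about how much of the type of $c_x$ survives under arbitrary Skolem-term substitutions; the descriptive-set-theoretic content is only the bookkeeping above.
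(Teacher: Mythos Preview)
Your proposal is not a proof but a plan, and you say so yourself: the entire model-theoretic content is packed into the sentence ``Granting such a construction the argument closes immediately.'' The construction you grant---a Borel family of \emph{finitely generated} models $M_x=\hull(c_x)$ whose gap order is a prescribed $L_x$, with isomorphism type depending only on the $E_t$-class of $x$---is precisely the theorem, and you give no indication of how to produce $c_x$ or how to verify the invariance. The obstacle you note after Theorem~\ref{first} is real: the canonical $I$-models are the only machinery in the paper for controlling gap structure, and they are never finitely generated for infinite $I$. Moreover, even if you could plant gaps with a single generator, the isomorphism relation on finitely generated models is governed by definable permutations of the generator, and these can scramble gap data in ways your outline does not address.

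The paper's argument is entirely different and avoids gaps altogether. It first observes that $\oiso_T^\sfg$ is Borel bireducible with the orbit equivalence relation $E_G^{S(T)}$ of the group $G$ of definable permutations acting on the Stone space $S(T)$ of complete $1$-types, since $\hull(a)\cong\hull(b)$ iff some $g\in G$ carries $\tp(a)$ to $\tp(b)$. It then builds, by iterated applications of Ramsey's theorem inside $\PA$, a perfect tree $\langle X_s:s\in 2^{<\omega}\rangle$ of unbounded definable subsets of the prime model, together with canonical order-preserving definable bijections $\alpha_{s,t}\colon X_s\to X_t$, arranged so that at level $n$ every $Y_s$ is homogeneous for $\phi_n(x,\alpha_{s,t}(y))$ for all $t$. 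Each branch $b\in 2^\omega$ determines a type $p_b$ by $X_{b\restriction n}\in p_b$. If $b\mathrel{E_0}b'$ one extends $\alpha_{b\restriction n,\,b'\restriction n}$ to a definable permutation witnessing $p_b\sim_G p_{b'}$. Conversely, if $g\in G$ sends $p_b$ to $p_{b'}$, pick $n$ with $\phi_n(x,y)\equiv(g(x)=y)$; homogeneity forces $g$ to agree with $\alpha_{b\restriction n,\,b'\restriction n}$ on $Y_{b\restriction n}$, and the coherence of the $\alpha$'s then yields $b(i)=b'(i)$ for all $i>n$. This is exactly the ``delicate verification'' you flagged, carried out for types rather than for gap orders; working in $S(T)$ is what makes it tractable.
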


For the proof, let $M$ be a prime model of $T$ and let $G$ be the
group of definable permutations of $M$.  Then $G$ acts on the space
$S(T)$ of complete $1$-types over $T$ by setting $gp(x)=$ the unique
complete type in $S(T)$ containing $\set{\vp(g^{-1}(x)):\vp(x)\in
  p(x)}$.  (Here, each $g\in G$ is identified with a Skolem term for
$g$.)  Notice that if $p(x)$ is the type of $a$, then $gp(x)$ is the
type of $g(a)$.  Let $E^{S(T)}_G$ denote the orbit equivalence
relation on $S(T)$ induced by the action of $G$.

\begin{lem}
  $\oiso_T^\sfg$ is Borel bireducible with $E^{S(T)}_G$.
\end{lem}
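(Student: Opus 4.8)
The plan is to exhibit Borel reductions in both directions between the isomorphism relation $\oiso_T^\sfg$ on finitely generated models of $T$ and the orbit equivalence relation $E^{S(T)}_G$ on the type space $S(T)$. The key observation, already recorded in the text, is that a finitely generated model is (up to the definable coding of tuples) of the form $\hull(a)$ for a single element $a$, and that $\hull(a)\iso\hull(b)$ exactly when $\tp(a)=\tp(b)$. So the isomorphism type of a finitely generated model of $T$ is faithfully coded by a point of $S(T)$, and the question becomes: when do two types $p=\tp(a)$ and $q=\tp(b)$ give isomorphic Skolem hulls? Since $M$ is prime, it elementarily embeds into every model of $T$, and in particular both $\hull(a)$ and $\hull(b)$ contain a copy of $M$; an isomorphism $\hull(a)\to\hull(b)$ restricts to an automorphism of that prime submodel (as the prime model is rigid only up to... no—rather, it restricts to an elementary self-embedding of $M$, but since $M$ is finitely generated over nothing it is in fact pointwise definable, so the restriction is the identity). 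This is where the group $G$ of definable permutations enters: an isomorphism $\hull(a)\to\hull(b)$ is determined by where it sends $a$, it must send $a$ to some generator $b'$ of $\hull(b)$, and ``chasing Skolem terms'' shows the map is exactly a definable permutation applied to $a$. Hence $p \mathrel{E^{S(T)}_G} q$ iff $gp=q$ for some definable permutation $g$, iff $\hull(a)\iso\hull(b)$.

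**First I would** make precise the reduction from $\oiso_T^\sfg$ to $E^{S(T)}_G$: given a finitely generated model presented on domain $\omega$, search (Borel-ly, by unbounded search over $\omega$) for an element $a$ generating it, and output $\tp(a)\in S(T)$. Two such models are isomorphic iff for some choice of generators $a,b$ we have $\hull(a)\iso\hull(b)$, which by the term-chasing argument above is iff $\tp(b)=g\tp(a)$ for some $g\in G$, i.e.\ iff the outputs are $E^{S(T)}_G$-related. The ambiguity in the choice of generator is harmless because $G$ acts transitively on the set of generators of a fixed $\hull(a)$: if $\hull(a)=\hull(a')$ then $a'=t(a)$ and $a=s(a')$ for Skolem terms $t,s$, and the map $x\mapsto t(x)$ is then a definable permutation of $M$ (it has a definable two-sided inverse modulo $T$, namely $s$). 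For the reverse reduction, from $E^{S(T)}_G$ to $\oiso_T^\sfg$, send a type $p\in S(T)$ to the canonical finitely generated model $\hull(a_p)$ where $a_p$ realizes $p$; more concretely, build on domain $\omega$ the term model of $T\cup p(a)$, which is a Borel operation. Then $p\mathrel{E^{S(T)}_G}q$ iff $\hull(a_p)\iso\hull(a_q)$ by the same equivalence, so this is a reduction.

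**The main obstacle** will be the term-chasing lemma: the claim that every isomorphism $h:\hull(a)\to\hull(b)$ between finitely generated models of $T$ is ``realized by a definable permutation,'' i.e.\ that $h(a)$ is a generator $b'$ of $\hull(b)$ and there is a single Skolem term (invertible modulo $T$) carrying $a$ to $b'$ in the sense required to witness $gp(x)=q(x)$. The point requiring care is that $h$ sends $a\mapsto b'$ for some generator $b'$ of $\hull(b)$, and $b'=u(b)$, $b=v(b')$ for terms $u,v$; composing, one gets terms $r,r'$ with $\PA\vdash r'(r(x))=x$ on the relevant type, and this $r$ (suitably massaged into an honest permutation of $M$, using primality/pointwise-definability to handle parameters) is the required $g\in G$, with $g\,\tp(a)=\tp(b')=\tp(b)\cdot(\text{const})$... one must verify $\tp(b')=\tp(h(a))$ indeed equals $q=\tp(b)$ up to the $G$-action. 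Concretely: $h$ being an isomorphism gives $(\hull(a),a)\equiv(\hull(b),b')$, hence $\tp(a)$ and $\tp(b')$ are ``the same type transported by the term defining $b'$ from $b$,'' which is exactly the statement $\tp(b')=g\,\tp(b)^{-1}$... I would set up the bookkeeping so that this reads cleanly as $g\,\tp(a)=\tp(b)$ for the appropriate $g\in G$. Checking that the two candidate maps are genuinely Borel (the searches for generators and the construction of term models are standard) is routine. So the plan is: (1) recall $\hull(a)\iso\hull(b)\Leftrightarrow$ some generators have equal type; (2) prove the term-chasing lemma identifying ``$\hull(a)\iso\hull(b)$'' with ``$\tp(a),\tp(b)$ lie in the same $G$-orbit''; (3) read off both Borel reductions from this equivalence, using transitivity of $G$ on generators of a fixed hull to handle the choice of generator.
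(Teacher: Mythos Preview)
Your proposal is correct and follows exactly the approach sketched in the paper: send a type to its canonically built prime model for one direction, and send a finitely generated model to the type of a chosen generator for the other. The paper's own proof is a two-sentence sketch that omits precisely the term-chasing verification you (correctly) flag as the main point; your plan for handling it---extending the Skolem term witnessing $c=s(b)$ to an honest definable permutation of $M$---is the standard fix.
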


Thus, we have found an explicit countable relation witnessing that
$\oiso_T^\sfg$ is essentially countable.


\begin{proof}
  It is not difficult to show that any map which sends a type $p(x)$
  to a canonically defined prime model of $p(x)$ will give a reduction
  from $E^{S(T)}_G$ to $\oiso_T^\sfg$.  Similarly, any map which sends
  a finitely generated model of $T$ to the type of one of its
  generators will give a reduction from $\oiso_T^\sfg$ to $E^{S(T)}_G$.
\end{proof}

Theorem~\ref{fg} now follows immediately from the following result.

\begin{lem}
  $E_0$ is Borel reducible to $E^{S(T)}_G$.
\end{lem}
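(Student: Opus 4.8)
The plan is to deduce the lemma from the Harrington--Kechris--Louveau dichotomy quoted above, rather than to build a reduction by hand. Since $\lan$ has only countably many formulas, $G$ is a countable group, and it acts on the Stone space $S(T)$ by homeomorphisms; hence $E^{S(T)}_G$ is Borel (indeed $F_\sigma$), and it suffices to prove that $E^{S(T)}_G$ is \emph{not smooth}.

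First I would pass to a perfect invariant subspace. Since $T$ has $\cont$ complete $1$-types --- for instance the partial types asserting ``the $n$th binary digit of $v$ equals $\epsilon(n)$'' for all $n$, with $\epsilon\in 2^\omega$, are pairwise inconsistent and each consistent with $T$ --- the Cantor--Bendixson perfect kernel $K$ of $S(T)$ is a nonempty compact perfect metrizable, hence Polish, space; and $K$ is $G$-invariant, because the perfect kernel is a topological invariant while each $g\in G$ acts as a homeomorphism. As $K$ is a $G$-invariant Borel subset of $S(T)$, the orbit equivalence relation $E^K_G$ of $G$ on $K$ Borel reduces to $E^{S(T)}_G$, so it is enough to show $E^K_G$ is not smooth. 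For this I would use the standard fact that a countable group acting by homeomorphisms on a nonempty perfect Polish space with a dense orbit has a non-smooth orbit equivalence relation. (Briefly: a Borel reduction $f$ to equality on $2^\omega$ may be taken continuous on a $G$-invariant comeager set $C$; picking $x_0\in C$ with dense orbit --- such points are comeager --- $f$ is constant on the dense subset $Gx_0$ of $C$, hence on all of $C$, so $C$ lies inside the single orbit $f^{-1}(f(x_0))$, which is countable and therefore meager, contradicting that $C$ is comeager.) Since a topologically transitive continuous action on a Polish space automatically has a dense orbit, everything reduces to showing that $G$ acts topologically transitively on $K$; as the clopen sets $[\psi]$ ($\psi$ an $\lan$-formula) generate the topology, this amounts to: for any $\psi_1(v),\psi_2(v)$ with $[\psi_i]\cap K\neq\emptyset$ there is $g\in G$ with $[\psi_1(g(v))\wedge\psi_2(v)]\cap K\neq\emptyset$.

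This last point is where the model theory of arithmetic is used, and I expect it to be the main obstacle. Two facts suffice. First, for an $\lan$-formula $\psi$, we have $[\psi]\cap K\neq\emptyset$ exactly when the set $\psi(M)$ of solutions of $\psi$ in the prime model $M$ is \emph{big}, in the sense that its cardinality, computed inside $M$, is not a standard natural number: if that cardinality were a standard $k$, then $T$ would prove that $\psi$ has exactly $k$ solutions and $[\psi]$ would consist of just the $k$ principal types of the corresponding definable elements (none of which lie in $K$), while conversely, if $\psi(M)$ is big, then $[\psi]$ is uncountable, as one sees by pushing types forward along the definable increasing enumeration of $\psi(M)$. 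Second, if $D_1,D_2$ are big definable subsets of $M$, there is a definable permutation $g$ of $M$ carrying a big definable subset $E\subseteq D_2$ into $D_1$: choose a nonstandard $m\in M$ with $m\leq|D_1|,|D_2|$ (any fixed definable nonstandard element if both $D_i$ are unbounded, the smaller of the two cardinalities otherwise), let $E$ be the set of the first $m$ elements of $D_2$ and $E'$ the set of the first $m$ elements of $D_1$, match $E$ with $E'$ by position, and extend this definable bijection to a definable permutation of $M$ using that $M\setminus E$ and $M\setminus E'$ are unbounded and hence each in definable bijection with $M$ (when $T$ is the theory of the standard model one argues the same way with infinite coinfinite sets in place of bounded ones). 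Applying the second fact with $D_i:=\psi_i(M)$ produces $g\in G$ for which $\psi_1(g(v))\wedge\psi_2(v)$ defines a set containing $E$, hence big, hence meeting $K$ by the first fact. This establishes topological transitivity of $G$ on $K$; therefore $E^K_G$, and so $E^{S(T)}_G$, is not smooth, and $E_0\leq_B E^{S(T)}_G$ follows, proving Theorem~\ref{fg}.
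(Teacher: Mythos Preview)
Your argument is correct, but it is genuinely different from the paper's. The paper builds an \emph{explicit} Borel reduction $b\mapsto p_b$: using Ramsey's theorem inside \PA, it constructs a tree $\langle X_s:s\in 2^{<\omega}\rangle$ of unbounded definable sets, together with coherent order-preserving bijections $\alpha_{s,t}$ between nodes at the same level, arranged so that for $b\mathrel{E}_0 b'$ the map $\alpha_{b\upharpoonright n,\,b'\upharpoonright n}$ witnesses $p_b\sim_G p_{b'}$, while a homogeneity condition forces any $g$ with $gp_b=p_{b'}$ to agree with some $\alpha_{s,t}$ on a tail, yielding $b\mathrel{E}_0 b'$. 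You instead invoke the Harrington--Kechris--Louveau dichotomy and reduce everything to showing that $G$ acts topologically transitively on the perfect kernel of $S(T)$; the model theory enters only through the observation that any two ``big'' definable sets can be linked by a definable permutation, which is considerably softer than the Ramsey-theoretic tree construction.

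What each approach buys: the paper's hands-on construction produces an explicit family of types and, as the remark following the proof notes, these types $p_b$ are unbounded and $2$-indiscernible, hence minimal---extra structural information that supports the suggestion that $E^{S(T)}_G$ is much more complex than $E_0$. Your route is shorter and more conceptual, isolating exactly the model-theoretic input needed (transitivity on big definable sets), but it is nonconstructive: it rests on the HKL dichotomy and a Baire-category argument, so it gives no concrete reduction and no information about which types witness the embedding of $E_0$. One small point worth making explicit in your write-up is that ``$[\psi]\cap K\neq\emptyset$'' really is equivalent to ``$\psi(M)$ big'': the forward direction is immediate, and for the converse you should note that $[\psi]$ is then uncountable (your push-forward remark), hence cannot lie in the countable scattered part of $S(T)$.
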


\begin{proof}
  We will construct a family $\seq{X_s:s\in2^{\olt\omega}}$ of
  unbounded definable subsets of $M$ with the following properties:
  \begin{enumerate}
  \item $X_s\subset X_t$ whenever $s\supset t$;
  \item $X_s\cap X_t=\varnothing$ whenever $\abs{s}=\abs{t}$ and $s\neq
    t$;
  \item for every $b\in 2^\omega$, there exists a unique type $p_b(x)$
    such that $X_{b\harpo n}$ is in $p_b(x)$ for all $n\in\om$.
    (Here, we say that a definable set $X$ is in $p$ iff the formula
    that defines it is in $p$.);
  \item for every $b,b'\in2^\omega$, we have $b\mathrel{E}_0b'$ iff
    $p_b\sim_Tp_{b'}$.
  \end{enumerate}
  Thanks to property (4), the proof will be complete once this is
  done.  Our construction will have the following additional property.
  First, for all $s,t\in2^{\mathord{<}\omega}$ such that
  $\abs{s}=\abs{t}$, let $\alpha_{s,t}\colon X_s\rightarrow X_t$
  denote the unique definable order-preserving bijection.  Then we
  will have:
  \begin{enumerate}
  \item[(5)] $\alpha_{s,t}\harpo X_{sr}=\alpha_{sr,tr}$ for all $s,t$
    such that $\abs{s}=\abs{t}$, and for all $r$.
  \end{enumerate}

  To begin the construction, let $\seq{\phi_i(x,y): i\in\om}$ be a
  fixed enumeration of the binary formulas.  Let $X_\varnothing=M$,
  and given $X_s$ for all $s\in2^n$, we define $X_{s0},X_{s1}$ as
  follows.  First, repeatedly using Ramsey's Theorem (formalized
  inside \PA) and the functions $\alpha_{s,t}$, we find unbounded
  definable subsets $Y_s\subset X_s$ such that
  \begin{itemize}
  \item[(6)] $Y_s$ is homogeneous for $\phi_n(x,\al_{s,t}(y))$ for all
    $s,t\in2^n$; (Here, $Y$ is said to be {\em homogeneous} for
    $\vp(x,y)$ iff for all $x,y,u,v\in Y$ with $x<y$ and $u<v$, we
    have $(\vp(x,y)\iff
    \vp(u,v))\;\wedge\;(\vp(y,x)\iff\vp(v,u))\;\wedge\;(\vp(x,x)\iff\vp(u,u))$.)
  \item[(7)] $\alpha_{s,t}(Y_s)=Y_t$ for all $s,t\in2^n$.
  \end{itemize}
  Next, let $X_{s0},X_{s1}$ be a partition of $Y_s$ into disjoint
  unbounded and definable sets (you could take every other element in
  an enumeration in $Y_s$).  Now, for each $b\in 2^\om$ we define
  $p_b(x)$ by
  \[\vp(x)\in p_b(x)\Iff\ex n\ X_{b\harpo n}\subseteq \vp(M)\;.
  \]

  Thus we can guarantee that (1)--(3) and (5)--(7) are all satisfied;
  it remains only to show that (4) follows from these.  Suppose first
  that $b\mathrel{E}_0b'$, and let $n\in\omega$ be the last index such
  that $b(n-1)\neq b'(n-1)$.  Then by (5), $\alpha_{b\harpo n,b'\harpo
    n}$ maps $X_{b\harpo i}$ onto $X_{b'\harpo i}$ for all $i\geq n$.
  It is not difficult to extend $\alpha_{b\harpo n,b'\harpo n}$ to a
  definable permutation of $M$ which also maps $X_{b\harpo i}$ onto
  $X_{b'\harpo i}$ for all $i<n$.  It follows that
  $p_b(x)\sim_Tp_{b'}(x)$.

  For the converse, suppose that $p_b(x)\sim_Tp_{b'}(x)$ and let $g\in
  G$ be a definable permutation of $M$ satisfying $gp_b(x)=p_{b'}(x)$.
  Then we have:
  \begin{center}
    for all definable $X$, if $X\in p_b(x)$ then $g(X)\in p_{b'}(x)$.
  \end{center}
  Let $n$ be such that $\vp_n(x,y)$ is the formula for $g(x)=y$, and
  let $s=b\harpo n$ and $t=b'\harpo n$. Then $Y_s$ is homogeneous for
  $\vp_n(x,\al_{s,t}(y))$. Since $\vp_n(x,\al_{s,t}(y))$ defines the
  relation $g(x)=\al_{s,t}(y)$, by (6) one of the following holds:
  \begin{enumerate}
  \item[(a)] For all $x\in Y_s$, $g(x)=\al_{s,t}(x)$, or
  \item[(b)] For all $x,y\in Y_s$, if $x\not=y$, then
    $g(x)\not=\al_{s,t}(y)$.
  \end{enumerate}
  But (b) implies that $g$ sends $Y_s$ completely outside of $Y_t$,
  contradicting that $p_{b'}(x)=gp_b(x)$.  Thus (a) holds, and this
  implies that $g\harpo Y_s=\al_{s,t}\harpo Y_s$. It follows that
  $\al_{s,t}$ maps $X_{b\harpo i}$ to $X_{b'\harpo i}$ for all $i>n$,
  and together with (5) this implies that $b(i)=b'(i)$ for all $i>n$.
  Thus, $b\mathrel{E}_0b'$, and the proof is complete.
\end{proof}

It is worth remarking that as a consequence of property (6) of the
above construction, the types $p_b$ are each unbounded and
2-indiscernible.  Such types are indiscernible and minimal in the
sense of Gaifman. Since minimal types are extremely special, this
gives some evidence that $E^{S(T)}_G$ is much more complex than $E_0$.

\section{Recursively saturated models}

Let $\lan$ be a finite first-order language. An $\lan$-structure is
\emph{\rs} iff for any finite $\bar a\in M^n$, and any recursive set
of $\lan$-formulas $p(x,\bar y)$, if $p(v,\bar a)$ is consistent with
$\Th(M,\bar a)$, then $p(v,\bar a)$ is realizable in $M$.  Countable
\rs\ models of \PA\ form a robust class which has been intensively
studied over the last 30 years. In this section we shall show that, in
contrast with the class of all \ct\ models of \PA, the classification
problem for the \ct\ \rs\ models is Borel.  We shall even isolate its
precise complexity.

To see that the classification problem for \ct\ \rs\ models is Borel,
we need only the most basic property of \rs\ models. Recall that the
{\emph standard system} of a nonstandard model $M\models \PA$ is the
collection
\[\SSy(M)\defeq\{X\cap\N: X\in\Def(M)\}\;.
\]
The following result is standard, see for instance Proposition~1.8.1
of \cite{ksbook} for a proof.

\begin{prop}
  \label{basic}
  If $M$ and $N$ are \rs\ models of a completion $T$ of \PA, then
  $M\cong N$ iff $\SSy(M)=\SSy(N)$.
\end{prop}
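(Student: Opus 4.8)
The plan is to prove both directions of the biconditional. The forward direction is immediate: if $M \cong N$ then they are isomorphic structures, so $\Def(M)$ and $\Def(N)$ determine the same subsets of the common standard cut, whence $\SSy(M) = \SSy(N)$. (More carefully, an isomorphism carries the standard cut of $M$ onto that of $N$ and commutes with definability, so it induces the identity on standard systems once we identify each standard cut with $\N$.)

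For the reverse direction, suppose $M, N \models T$ are recursively saturated and $\SSy(M) = \SSy(N)$; I would build an isomorphism by a back-and-forth argument. Enumerate $M = \{a_0, a_1, \dots\}$ and $N = \{b_0, b_1, \dots\}$, and construct finite partial maps extending one another so that at each stage $\bar a \mapsto \bar b$ satisfies $\tp^M(\bar a) = \tp^N(\bar b)$ (as complete types over $T$ in finitely many variables). To carry out a forth step, given $\bar a \mapsto \bar b$ with matching types and a new element $a$, I must find $b \in N$ with $\tp^N(\bar b, b) = \tp^M(\bar a, a)$; equivalently, the type $q(v) = \tp^M(\bar a, a)(v)$ (a type in one variable over $\bar b$, using that $\tp^M(\bar a) = \tp^N(\bar b)$ to transfer it) must be realized in $N$. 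This is where recursive saturation and the standard-system hypothesis come in: $q(v)$ need not be recursive, but it is \emph{coded} in $\SSy(M)$ — the set of (Gödel numbers of) formulas $\varphi(v, \bar y) \in q$ is an element of $\SSy(M)$, since that set is $\{n : M \models \varphi_n(a, \bar a)\}$ for an appropriate $M$-definable predicate, and hence lies in $\SSy(M) = \SSy(N)$. Thus $q$ is recursive in some parameter whose standard system class is realized in $N$, and by recursive saturation of $N$ (applied to the recursive-in-a-parameter type $q$, consistent with $\Th(N, \bar b)$ because it is consistent with the equal theory $\Th(M, \bar a)$) we obtain the desired $b$. The back step is symmetric, using recursive saturation of $M$.

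The main obstacle is the bookkeeping around \emph{coding of types}: making precise that a complete type over a finite tuple in a nonstandard model of $\PA$ is recursive in (a set coding) some element of the standard system, so that one may legitimately invoke recursive saturation of the other model. This is exactly the content of the cited Proposition~1.8.1 of \cite{ksbook}, so in the write-up I would state it as the key lemma — that in a recursively saturated $M \models \PA$, every complete type $\tp^M(a/\bar b)$ is coded in $\SSy(M)$ — and then the back-and-forth goes through routinely. One should also note at the outset that the standard case $M = \N$ (the standard model) is trivial, and the interesting content is entirely about nonstandard models.
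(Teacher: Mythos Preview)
The paper does not give its own proof of this proposition; it simply cites Proposition~1.8.1 of \cite{ksbook} and moves on. Your back-and-forth argument is precisely the standard proof one finds there: realized types in a recursively saturated model of \PA\ are coded in its standard system, the hypothesis $\SSy(M)=\SSy(N)$ lets you transport the code to the other model, and recursive saturation on that side then realizes the type.

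Two minor points. First, countability of $M$ and $N$ is needed for the back-and-forth to succeed; this is implicit throughout the paper's section on recursively saturated models but should be stated in your write-up. Second, your closing aside about the case $M=\N$ is slightly off: the standard model is \emph{not} recursively saturated (the recursive type $\{v>S^n(0):n\in\omega\}$ is finitely satisfiable but unrealized), so that case never arises and need not be singled out.
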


When $M$ is countable, $\SSy(M)$ is a countable set of reals, and
hence $\SSy(M)$ is coded by a real. We must now be more precise about
how we code countable sets of reals. Unfortunately, the space
$[\mathcal P(\omega)]^\omega$ of countable sets of reals does not
carry a natural standard Borel structure.  We work instead with the
space $\mathcal P(\omega)^\omega$ of countable \emph{sequences} of
reals, and let $E_\sset$ denote the equivalence relation defined on
$\mathcal P(\omega)^\omega$ by
\[x\mathrel{E}_\sset y \Iff \set{x(n):n\in\omega}=\set{y(n):n\in\omega}\;.
\]
(The relation $E_\sset$ has also assumed the names $\mathord{=}^+$,
$E_{\sf ctble}$ and $F_2$.)  It is easy to see that
$\mathrel{E}_\sset$ is a Borel equivalence relation.  Moreover,
Proposition~\ref{basic} shows that the map which sends a recursively
saturated model $M$ to a code for $\SSy(M)$ is a Borel reduction from
$\oiso_T^\srec$ to $E_\sset$.  This implies in particular that
$\oiso_T^\srec$ is Borel, and hence it is not nearly as complex as the
full $\oiso_T$.

\begin{thm}
  \label{recsat}
  The isomorphism equivalence relation $\oiso_T^\srec$ on the space of
  \rs\ models of $T$ is Borel bireducible with $E_\sset$.\footnote{The
    referee has pointed out that the nontrivial direction of Theorem
    \ref{recsat} is essentially the same as the main result of
    Marker's \cite{mar07}. Marker proved that for any first order
    theory in a \ct\ language where the type space $S(T)$ is un\ct,
    $E_\sset\leq \oiso_T$.}
\end{thm}

\begin{proof}
  We have just seen that there is a Borel reduction from
  $\oiso_T^\srec$ to $E_\sset$.  For the reverse direction, we shall
  need the notion of genericity.  If $(\N,\ldots)$ is any expansion of
  the standard model of arithmetic, then a subset $X\subseteq\N$ is
  said to be \emph{Cohen generic} over $(\N,\ldots)$ iff it meets
  every dense subset of the poset $2^{<\N}$ which is definable over
  $(\N,\ldots)$.  Cohen generics exist over every \ct\ expansion of
  $\N$.  We will work over $(\N,T)$, where we have identified $T$ with
  the set of G\"odel numbers of the sentences in $T$.

  Now, by Lemma~6.3.6 of \cite{ksbook}, there exists a perfect set
  $\S$ of subsets of $\N$ which are \emph{mutually Cohen generic} over
  $(\N,T)$ in the sense that for any distinct $X_1,\ldots,X_n\in\S$,
  $X_n$ is Cohen generic over $(\N,T,X_1,\ldots,X_{n-1})$. Identifying
  $\mathcal P(\omega)$ with the perfect set $\S$, each $C\in\mathcal
  P(\omega)^\omega$ naturally corresponds to an element
  $\S_C\in\S^\omega$.  Let $\X_C$ be the collection of subsets of $\N$
  which are definable from $T$ together with the sets enumerated in
  $\S_C$.  By mutual genericity, if $C\not= C'$, then
  $\X_C\not=\X_{C'}$. Since $\X_C$ is a Scott set and $T\in\X_C$,
  there exists a \ct\ \rs\ model $M_C$ of $T$ such that
  $\SSy(M_C)=\X_C$ (see for instance Theorem~3.5 of \cite{smo81}). it
  follows that the map $C\mapsto M_C$ is a Borel reduction from
  $E_\sset$ to $\oiso_T^\srec$, which completes the proof.
\end{proof}

The equivalence relation $E_\sset$ is an important benchmark in the
Borel reducibility hierarchy; many natural equivalence relations lie
at this complexity level. $E_\sset$ is not essentially countable, but
rather lies ``just above'' the countable Borel equivalence relations
(indeed, $E_\infty<_BE_\sset$ but there are few known interesting $E$
such that $E_\infty<_BE<_BE_\sset$).  In particular,
Theorem~\ref{recsat} implies that the class of \rs\ models is also not
essentially countable. There is, however, a simple argument of Jim
Schmerl which already implies this fact, and moreover implies that
many related classes of models are not essentially countable.

Let $T$ be a completion of \PA\ and let $M$ be a countable model of
$T$. If $A\subseteq\om$ is not in $\SSy(M)$, then by compactness, $M$
has an elementary extension $N$ such that $A\in \SSy(N)$.  In
particular, $N$ realizes a type which is not realized in M.  Moreover,
if $M$ is \rs, then we can make $N$ \rs\ as well. The following result
shows that a class of models with this property cannot be essentially
countable.

\begin{thm}
  \label{not_ec}
  Suppose that $\cal C$ is a class of \ct\ models such that every
  $K\in\cal C$ has an elementary extension in $\cal C$ realizing a
  type which is not realized in $K$. Further suppose that $\cal C$ is
  closed under unions of \ct\ elementary chains. Then $\cal C$ is not
  essentially \ct.
\end{thm}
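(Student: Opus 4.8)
The plan is to argue by contradiction using the Hjorth--Kechris characterization, Theorem~\ref{hk}. Since essential countability presupposes that $\oiso_{\mathcal C}$ lives on a standard Borel space, I may assume $\mathcal C$ is the class of models of some sentence $\Theta$ of $\lan_{\om_1,\om}$, and if $\mathcal C$ were essentially countable this would be witnessed by a countable fragment $F\ni\Theta$: every countable $K\models\Theta$ would carry a tuple $\bar a$ for which $\Th_F(K,\bar a)$ is $\aleph_0$-categorical. I will produce a single $K\in\mathcal C$ together with a countable $N\not\iso K$ satisfying $(N,\bar b)\equiv_F(K,\bar a)$, with $\bar a$ the tuple given by Theorem~\ref{hk}. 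Since $\aleph_0$-categoricity of $\Th_F(K,\bar a)$ forces every countable model of it to be isomorphic to $(K,\bar a)$, and hence its reduct to be isomorphic to $K$, this will be the desired contradiction.

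To build $K$ and $N$, I would construct a continuous elementary chain $\seq{K_\alpha:\alpha<\om_1}$ inside $\mathcal C$. Start from any $K_0\in\mathcal C$. At a successor stage, apply the hypothesis to pass from $K_\alpha$ to an elementary extension $K_{\alpha+1}\in\mathcal C$ realizing a type not realized in $K_\alpha$; at a limit stage $\lambda<\om_1$, set $K_\lambda=\bigcup_{\alpha<\lambda}K_\alpha$, which lies in $\mathcal C$ because $\mathcal C$ is closed under unions of countable elementary chains. Each $K_\alpha$ is countable, the chain is strictly increasing, and for $\alpha<\beta$ the model $K_\beta$ realizes a type (already realized in $K_{\alpha+1}$) that is not realized in $K_\alpha$; in particular $K_\alpha\not\iso K_\beta$.

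It remains to locate $\alpha<\beta$ for which $K_\alpha$ is an $F$-elementary substructure of $K_\beta$, and this is the one point needing care: the hypothesis furnishes only \emph{first-order} elementary extensions, so nothing about $\lan_{\om_1,\om}$-formulas is handed to us directly. I would recover it by a reflection argument. Let $K_{\om_1}=\bigcup_{\alpha<\om_1}K_\alpha$ (which need not lie in $\mathcal C$). Using the Tarski--Vaught criterion for the countable fragment $F$, together with the standard fact that the union of an $F$-elementary chain is an $F$-elementary extension of each of its members, one checks that $\set{\alpha<\om_1:K_\alpha\text{ is an }F\text{-elementary substructure of }K_{\om_1}}$ is closed and unbounded in $\om_1$; choose any two of its elements $\alpha<\beta$, so that $K_\alpha$ is an $F$-elementary substructure of $K_\beta$. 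Taking $\bar a$ to be the Hjorth--Kechris tuple for $K_\alpha$, the structure $(K_\beta,\bar a)$ is then a countable model of $\Th_F(K_\alpha,\bar a)$, hence isomorphic to $(K_\alpha,\bar a)$, hence $K_\beta\iso K_\alpha$ --- contradicting the previous paragraph. The main obstacle is precisely this last step, converting the purely first-order extension hypothesis into genuine $\lan_{\om_1,\om}$-elementarity; the length-$\om_1$ construction and the club argument are what make it go through.
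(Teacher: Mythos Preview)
Your argument is correct and follows essentially the same strategy as the paper's proof: both build a continuous length-$\omega_1$ elementary chain in $\mathcal C$ with strictly increasing sets of realized types, and then use a L\"owenheim--Skolem/reflection argument to extract an $F$-elementary subchain, yielding two non-isomorphic countable models with the same $F$-theory over the Hjorth--Kechris tuple. The paper compresses your club argument into the phrase ``by a Skolem--L\"owenheim argument'' and quantifies over tuples $\bar a$ in the uncountable union, while you fix one $K_\alpha$ from the club and invoke its Hjorth--Kechris tuple directly; these are only cosmetic differences in presentation.
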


\begin{proof}
  We shall use the characterization of essential countability provided
  by Theorem~\ref{hk}.  Let $F$ be a \ct\ fragment of
  $\lan_{\om_1,\om}$ and let $M$ be a model which is a union of a
  continuous elementary chain in $\cal C$, and which realizes
  uncountably many types. By a Skolem-L\"owenheim argument, for every
  finite (or even \ct) $\bar a\in M^n$, we have $M=\bigcup_{\al<\om_1}
  K_\al$, where $K_\al\in \cal C$ and $(K_\al,\bar
  a)\prec_F(K_\beta,\bar a)$ for all $\al<\beta<\om_1$. Hence, there
  must be $\al$ and $\beta$, such that $(K_\al,\bar
  a)\prec_F(K_\beta,\bar a)$ and $(K_\al,\bar a)\not\cong(K_\beta,\bar
  a)$.
\end{proof}

The paragraph preceding Theorem~\ref{not_ec} also applies to \ct\ \rs\
models of Presburger Arithmetic, which is the theory $\Th(\N,+)$. In
fact, it applies to the class of \ct\ \rs\ models of any
rich\footnote{$T$ is said to be \emph{rich} iff there exists a
  computable sequence of formulas $\seq{\vp_n(x):n\in\om}$ such that
  for all disjoint finite $A,B\subset \N$, $T\vdash\ex x\
  [\bigwedge_{i\in A} \vp_i(x)\land\bigwedge_{j\in B}\lnot\vp_j(x)]$.}
theory.  Hence, Schmerl's argument shows that none of these classes is
essentially \ct.

\section{Pairs of \rs\ models}

We have seen that the classification problem for countable recursively
saturated models is Borel.  However, each such model displays a rich
second-order structure which itself is a subject of further
classification attempts.  Much work has been done towards classifying
elementary submodels, elementary cuts, and automorphisms of \rs\
models of \PA.  None of these attempts have been completed, and there
are many open problems.  In this section we shall treat elementary
cuts, and in the next section automorphisms.

If $K$ is an \el\ cut in a \ct\ \rs\ model $M$ and $K$ itself is \rs,
then $K$ and $M$ will have the same standard system and hence $K\cong
M$.  Still, there are $\cont$ many isomorphism types of structures of
the form $(M,K)$, where $M$ and $K$ are \rs\ and $K\ee M$.  We shall
establish the following result.

\begin{thm}
  \label{cuts}
  Let $M$ be a \rs\ model of \PA. Then the classification problem for
  pairs $(M,K)$, where $K\ee M$ is \rs, is Borel complete.
\end{thm}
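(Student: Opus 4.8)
Fix a countable recursively saturated model $M$ of \PA. The target is a Borel reduction from $\oiso_\LO$ (which is Borel complete by Theorem~\ref{first}) to the isomorphism relation on the space of pairs $(M,K)$ with $K\ee M$ recursively saturated. So I must encode an arbitrary countable linear order $I$ as a recursively saturated elementary cut $K_I$ of the one fixed model $M$, in a Borel way, so that $I\cong J$ iff $(M,K_I)\cong(M,K_J)$. The key tool is the canonical $I$-model construction of Section~2 together with its conservativity property, Theorem~\ref{cons}: if $J$ is a proper initial segment of $I$ then $M_T(J)\ee M_T(I)$. The idea is to realize \emph{all} the cuts $K_I$ as initial segments of a \emph{single} ambient model, using the fact that $M$ itself, being recursively saturated, is saturated enough to contain many such configurations, or rather by first building a suitable universal template.

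\textbf{The construction.} Let $T=\Th(M)$. Work with the canonical $I$-model construction for the linear order $L_I = I + 1 + \mathbb{Q}\cdot(\text{something})$ — more precisely, I want a fixed linear order $L$ with a distinguished proper initial segment and a family of intermediate cuts indexed by countable linear orders. Here is the cleaner route. Given a countable linear order $I$, form $I^* = 1 + I$ (a copy of $I$ with a least point adjoined) and consider the order $L = I^* + \mathbb{Z}\cdot(1+\eta)$ or simply $L = I^* + $ (a fixed countable order $A$ chosen so that $M_T(L)$ is recursively saturated and has the right standard system). By Theorem~\ref{cons} (applied to the initial segment $I^* \subseteq L$), $M_T(I^*) \ee M_T(L)$, and this end extension is conservative. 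Now recursive saturation: by choosing $A$ appropriately — for instance $A$ containing a dense set of ``$\mathbb{Z}$-gaps'' realizing all types in a fixed Scott set — one arranges that $M_T(L)$ is recursively saturated with a prescribed standard system $\X$. Since recursive saturation of a model of \PA\ is determined up to isomorphism by its standard system (Proposition~\ref{basic}), $M_T(L) \cong M$, so we may as well take $M$ \emph{to be} $M_T(L)$. Then $K_I := M_T(I^*)$ is an elementary cut of $M$. The point is that the isomorphism type of the pair $(M, K_I) = (M_T(L), M_T(I^*))$ depends, by the ordertype-recovery property quoted after the definition of $M_T$, precisely on the ordertypes of $I^*$ and of $L$ relative to the cut; since $L = I^* + A$ with $A$ fixed, this is determined by the ordertype of $I^*$, hence of $I$. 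For the reduction to be Borel, note that for countable $I$ given as $(\omega, {<})$, both the domain $\omega$ of $M_T(L)$ and the predicate ``$x \in K_I$'' are arithmetic in $I$, exactly as in the discussion following Theorem~\ref{cons}.

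\textbf{Correctness of the equivalence.} For the forward direction, an isomorphism $I \cong J$ induces an isomorphism $L_I \cong L_J$ fixing $A$ setwise and mapping $I^*$ to $J^*$; functoriality of the canonical model construction then yields an isomorphism $M_T(L_I) \to M_T(L_J)$ carrying $M_T(I^*)$ onto $M_T(J^*)$, i.e. $(M, K_I) \cong (M, K_J)$. For the reverse direction, suppose $(M_T(L_I), M_T(I^*)) \cong (M_T(L_J), M_T(J^*))$. Such an isomorphism in particular maps $M_T(I^*)$ onto $M_T(J^*)$, so $M_T(I^*) \cong M_T(J^*)$; by the ordertype-recovery property of canonical $I$-models (the set of gaps below the cut recovers the ordertype), this forces $I^* \cong J^*$ and hence $I \cong J$.

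\textbf{The main obstacle.} The delicate step is arranging that \emph{one} fixed recursively saturated $M$ simultaneously serves as $M_T(L_I)$ for \emph{every} countable $I$ — i.e., that the ambient model does not vary with $I$. This is where Proposition~\ref{basic} is essential: it suffices to check that $M_T(I^* + A)$ always has the same standard system $\X$, independent of $I$. I expect this to follow because $\SSy$ of a canonical model $M_T(L)$ is determined by the choice of minimal type $p(x)$ and the theory $T$ (and is, in particular, the least Scott set containing $T$, or can be arranged so by suitable choice of $p$), not by the ordertype $L$; but verifying this carefully — and checking that the distinguished cut $K_I$ really is recursively saturated, which requires $\SSy(K_I) = \SSy(M)$ and conservativity — is the technical heart of the argument. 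A secondary point requiring care is the ``functoriality'' of $I \mapsto M_T(I)$ with respect to isomorphisms respecting a distinguished initial segment, but this is routine from the construction of $\Delta(x_i)_{i\in I}$, which is visibly natural in $I$.
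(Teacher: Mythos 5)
There is a genuine gap, and it sits exactly where you locate ``the technical heart'': neither the ambient model $M_T(L)$ nor the cut $K_I=M_T(I^*)$ is ever recursively saturated when $T$ is a completion of \PA\ in the arithmetic language. By Theorem~\ref{cons}, $M_T(L)$ is a conservative elementary end extension of the prime model $M_T(\varnothing)$ for \emph{every} $L$, so $\SSy(M_T(L))$ is the family of $T$-representable sets, independently of $L$; a diagonal argument shows $T$ is never $T$-representable, whereas every nonstandard recursively saturated $N$ has $\Th(N)\in\SSy(N)$. So no choice of the tail order $A$ makes $M_T(I^*+A)$ recursively saturated (your guess that its standard system is ``the least Scott set containing $T$'' is exactly backwards), the ambient model cannot be identified with $M$, and $K_I$ fails the hypothesis that the cut be recursively saturated. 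The missing idea --- and the reason the paper's proof looks so different --- is to run the canonical $I$-model construction over the \emph{expanded} theory $\Th(\N,S_\N)$ (or $\Th(M,S)$ for a partial inductive satisfaction class $S$ on $M$): Lemma~\ref{satisfaction} then makes every proper canonical extension recursively saturated as a model of \PA, and conservativity fixes its standard system so that all of these models are isomorphic to one fixed $M$ by Proposition~\ref{basic}.

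This correction exposes a second gap your backward direction would still face: once the cuts $K_I$ are recursively saturated they are all pairwise isomorphic as abstract models (same theory, same standard system), so one cannot conclude $I\cong J$ from $K_I\cong K_J$ via the ordertype-recovery property; the order must be recovered from how $K_I$ sits inside $M$. The paper does this with Lemma~\ref{uniq}: $K_I$ carries a \emph{unique} full inductive satisfaction class $S_I$ coded in $M$, any isomorphism of pairs $(M,K_I)\cong(M,K_J)$ must carry $S_I$ to $S_J$, and the resulting isomorphism $(K_I,S_I)\cong(K_J,S_J)$ is an isomorphism of genuine canonical models of the expanded theory, from which $I\cong J$ follows. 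Your proposal has no mechanism for this step, because in your setup the cuts were (incorrectly) assumed to be canonical models of plain \PA\ and recursively saturated at the same time --- two properties that are mutually exclusive.
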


For the proof, we shall initially give a \emph{single} model $M$
satisfying the conclusion of Theorem~\ref{cuts}.  Afterwards, we will
indicate how to modify the construction to obtain the full result.

Let $S_\N$ be the set $\{\seq{\cgod{\vp},n}: \N\models\vp(n)\}$. If
$(M,S)$ is is an elementary extension of $(\N,S_\N)$, then $S$ is an
example of a nonstandard \emph{full inductive satisfaction class} for
$M$, \emph{i.e.},~$(M,S)\models \SPA$ and $S$ satisfies Tarski's
inductive definition of satisfaction for all formulas in the sense of
$M$.  The existence of a full inductive satisfaction class for a model
$M$ entails strong restrictions on $\Th(M)$, but $M$ does not have to
be an elementary extension of $\N$ (see \cite{kot91}).  The next two
lemmas, which we state just for elementary extensions of $(\N,S_\N)$,
have more general formulations with almost identical proofs.

\begin{lem}
  \label{satisfaction}
  If $(\N,S_\N)\prec (M,S)$ and the extension is proper, then $M$ is
  \rs.
\end{lem}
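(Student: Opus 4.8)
The plan is to run an overspill argument inside $(M,S)$. Two preliminaries are needed. First, since the standard model satisfies the induction schema for arbitrary subsets, $(\N,S_\N)$ is a model of $\SPA$ in the language of arithmetic augmented by a predicate for the satisfaction class, so $(M,S)\models\SPA$ by elementarity and overspill is available in $(M,S)$; moreover properness of the extension forces $M$ to contain an element outside $\N$ (an elementary substructure on the same domain would coincide with $(\N,S_\N)$), so $M$ is a nonstandard model of $\PA$. Second, I record the standard fact that a full inductive satisfaction class agrees with genuine satisfaction on every \emph{standard} formula: by an external induction on the syntactic complexity of a standard $\lan$-formula $\varphi(\bar x)$, using the Tarski clauses that $S$ obeys in the sense of $M$ (which, restricted to standard formulas, are the usual ones), one shows that $(M,S)\models S(\cgod{\varphi},\bar v)$ if and only if $M\models\varphi(\bar v)$, for all $\bar v\in M$.

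Now fix $\bar a\in M^n$ and a recursive set of formulas $p(x,\bar y)=\set{\varphi_i(x,\bar y):i\in\omega}$ with $p(x,\bar a)$ consistent with the complete theory $\Th(M,\bar a)$; then for each standard $k$ the sentence $\exists x\bigwedge_{i<k}\varphi_i(x,\bar a)$ belongs to $\Th(M,\bar a)$ and hence holds in $M$. Since the sequence $\seq{\varphi_i:i\in\omega}$ is recursive, fix a $\Sigma_1$ formula $\delta(i,z)$ that $\PA$ proves defines a total function and whose value at each standard $i$ is the standard code $\cgod{\varphi_i(x,\bar y)}$. Working in $(M,S)$, let
\[\theta(k)\defeq\exists v\,\forall i<k\,\exists z\,\bigl(\delta(i,z)\wedge S(\mathrm{Sub}(z,v,\bar a))\bigr)\,,\]
where $\mathrm{Sub}(z,v,\bar a)$ denotes the $M$-definable operation substituting $v,\bar a$ for the free variables of the formula coded by $z$. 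For standard $k$, every witness $z$ to $\delta(i,z)$ with $i<k$ is the standard code $\cgod{\varphi_i}$, so by the agreement fact $\theta(k)$ is equivalent over $(M,S)$ to $M\models\exists x\bigwedge_{i<k}\varphi_i(x,\bar a)$; hence $(M,S)\models\theta(k)$ for every standard $k$.

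By overspill there is a nonstandard $e\in M$ with $(M,S)\models\theta(e)$; fix a witness $v\in M$. For every standard $i$ we then have $(M,S)\models S(\cgod{\varphi_i},v,\bar a)$, whence $M\models\varphi_i(v,\bar a)$ by the agreement fact. Thus $v$ realizes $p(x,\bar a)$ in $M$, and since $\bar a$ and $p$ were arbitrary, $M$ is recursively saturated.

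The only delicate point — and where essentially all the content lies — is the bookkeeping in $\theta$: one must verify that for standard $k$ the formula $\theta(k)$ genuinely unwinds to a first-order statement about $M$ (this uses both the agreement of $S$ with truth on standard formulas and the fact that $\delta$ takes standard values at standard inputs), and then that the post-overspill witness can be read back correctly at each standard coordinate. Everything else is routine, and the argument adapts verbatim to the more general elementary pairs $(K,S_K)\prec(M,S)$ alluded to in the text, replacing the external induction on standard formulas by the corresponding fact over $K$.
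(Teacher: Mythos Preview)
Your proof is correct and follows essentially the same overspill strategy as the paper: internalize ``some $v$ satisfies the first $k$ formulas of $p$'' as a formula of $(M,S)$ using the satisfaction predicate, verify it for standard $k$, overspill, and read the witness back at each standard coordinate. The one place the paper is slightly more economical is the agreement fact: rather than running an external induction on Tarski's clauses, it simply observes that for each standard $\varphi$ the biconditional $\forall v\,\forall\bar x\,[\varphi(v,\bar x)\leftrightarrow\seq{\cgod{\varphi},(v,\bar x)}\in S_\N]$ holds in $(\N,S_\N)$ by definition of $S_\N$, and then transfers it to $(M,S)$ by elementarity --- this uses the specific hypothesis $(\N,S_\N)\prec(M,S)$ directly and avoids having to first argue that $S$ is a full inductive satisfaction class.
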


\begin{proof}[Sketch of proof]
  First let us notice that for each $\vp(v,\bar x)$, we have
  \[(\N,S_\N)\models\fo v\;\fo\bar x\;\left[\vp(v,\bar x)
    \iff\seq{\cgod{\vp},(v,\bar x)}\in S_\N\right]\;.
  \]
  It follows that the same holds in $(M,S)$. Let $p(v,\bar x)$ be a
  recursive type. Let $P(x)$ be a formula which defines the set of
  G\"odel numbers for the formulas in $p(v,\bar x)$. Suppose that for
  some $\bar b\in M$, $p(v,\bar b)$ is consistent. Then for each
  $n<\om$,
  \[(M,S)\models \ex v\;\fo \cgod{\vp}<n\;
  \left[P(\cgod{\vp})\arr \seq{\cgod{\vp},(v,\bar b)}\in S\right]\;.
  \]
  By overspill, this must be true in $M$ for all $n<c$, for some
  nonstandard $c$, and this shows that $p(v,\bar b)$ is realized in M.
\end{proof}

\begin{lem}
  \label{uniq}
  Suppose that $(M,S_0)$ and $(M,S_1)$ are each elementary extensions
  of $(\N,S_\N)$.  If $(M,S_0,S_1)\models\SPA$ (recall that this means
  $M$ satisfies the induction schema even for formulas that mention
  $S_0,S_1$), then $S_0=S_1$.
\end{lem}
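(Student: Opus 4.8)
The plan is to exploit the one nontrivial feature of the hypothesis, namely that $(M,S_0,S_1)\models\SPA$ gives us the induction schema for formulas that mention \emph{both} $S_0$ and $S_1$, and then to run the familiar ``uniqueness of the satisfaction class'' argument: an internal induction on formula complexity showing that $S_0$ and $S_1$ agree on every (possibly nonstandard) formula.

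First I would record that each pair $(M,S_i)$ satisfies Tarski's compositional clauses for satisfaction. The assertion that $S_\N$ obeys these clauses --- the atomic clause, together with the clauses for $\lnot$, $\land$, and $\ex$ --- is a conjunction of first-order sentences in the language $\lan\cup\{S\}$, and each holds in $(\N,S_\N)$; since $(\N,S_\N)\prec(M,S_i)$, each holds in $(M,S_i)$ as well. By the same reasoning, the sentence asserting that every element of $S_\N$ is a pair $\seq{\cgod{\vp},a}$ in which $\cgod{\vp}$ codes a formula and $a$ codes an assignment transfers to $(M,S_i)$. Next, working inside $(M,S_0,S_1)$, I would consider the $\lan\cup\{S_0,S_1\}$-formula $\theta(e)$ expressing ``$e$ codes a formula $\vp$, and for every assignment $a$ we have $\seq{e,a}\in S_0\iff\seq{e,a}\in S_1$'', and prove $\theta(e)$ for all codes $e$ of formulas by induction on the complexity of the coded formula. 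This is exactly an instance of the induction schema for a formula mentioning $S_0$ and $S_1$, so it is licensed by $\SPA$. In the base case $e$ codes an atomic formula and the atomic Tarski clause fixes $\seq{e,a}\in S_i$ outright in terms of $M$ alone, so $S_0$ and $S_1$ agree there. In the inductive step, if $e$ codes $\lnot\psi$, $\psi_1\land\psi_2$, or $\ex v\,\psi$, the corresponding compositional clause reduces membership of $\seq{e,a}$ in $S_i$ to membership of pairs $\seq{\cgod{\psi},a'}$ in $S_i$ for subformulas $\psi$ of strictly smaller complexity; by the induction hypothesis $S_0$ and $S_1$ agree on those, hence on $\seq{e,a}$.

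Once $\theta(e)$ holds for all codes of formulas, the fact that every element of $S_0$ and of $S_1$ has the form $\seq{\cgod{\vp},a}$ yields $S_0=S_1$. The only point demanding genuine care is the inductive step: one must set up the arithmetization of formulas and assignments so that the compositional clauses genuinely express satisfaction by recursion along a well-founded internal measure of complexity, and so that $\theta$ is bona fide a formula of $\lan\cup\{S_0,S_1\}$ to which the $\SPA$-induction applies. This bookkeeping is standard (see the development of the satisfaction predicate in \PA), and with it in place the argument uses nothing beyond the availability of induction for formulas mentioning $S_0$ and $S_1$, which is precisely what $(M,S_0,S_1)\models\SPA$ supplies.
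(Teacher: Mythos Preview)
Your proposal is correct and follows essentially the same route as the paper's own sketch: transfer Tarski's compositional clauses to each $(M,S_i)$ by elementarity, then run an internal induction on formula complexity (licensed by $(M,S_0,S_1)\models\SPA$) to conclude that $S_0$ and $S_1$ agree on every formula in the sense of $M$. The paper adds only the remark that $\Delta_0$-induction already suffices for this argument.
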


\begin{proof}[Sketch of proof]
  Tarski's inductive definition of satisfaction is first-order over
  $(\N,S_\N)$. By elementarity, $S_0$ and $S_1$ obey the same
  definition in $M$.

  Now, by induction on complexity of formulas, one can show that for
  all formulas $\vp$ (in the sense of $M$) and all $\bar a\in M^n$,
  $\seq{\vp,\bar a}\in S_0\iff \seq{\vp, \bar a}\in S_1$. (Here, we
  used the assumption that $(M,S_0,S_1)\models\SPA$; in fact it is
  enough to assume that $(M,S_0,S_1)$ satisfies the
  $\Delta_0$-induction schema.)
\end{proof}

Now, let $(M,S_0)$ be a fixed \ct\ conservative elementary extension
of $(\N,S_\N)$. Then $M$ is recursively saturated, and since
$\SSy(M)=\Def(\N,S_\N)$, there is only one such $M$ up to
isomorphism. We shall show that this $M$ satisfies the conclusion of
Theorem~\ref{cuts}.

For a \ct\ linear order $(I,<)$, let $(\N(I+1),S)$ be the canonical
$(I+1)$-model of $\Th(\N,S_\N)$ with respect to some fixed minimal
type. This model is generated by an ordered set of indiscernibles
$\{a_i: i\in I+1\}$. Let $\N(I)$ be the elementary submodel generated
inside $(\N(I+1),S)$ by the set $\{a_i: i\in I\}$ (if $I$ is empty,
then put $\N(I)=\N$). Now, $\N(I+1)$ and $M$ are isomorphic as models
of \PA\ (without the satisfaction class), so we may let $f\colon
\N(I+1)\rightarrow M$ be a back-and-forth isomorphism and $K_I\defeq
f(M(I))$. This $K_I$ is the `canonical' $I$-cut of $M$. It is easy to
verify that the map $I\mapsto (M,K_I)$ is Borel.

We must show that this construction yields a Borel reduction from
linear orders to pairs of models.  To see that the isomorphism type of
$(M,K_I)$ depends only on the isomorphism type of $(I,<)$, first
observe that by the basic properties of canonical $I$-models, we have
$(\N(I+1),S)$ is a conservative \ele\ of $(\N(I),\N(I)\cap S)$. Thus,
it follows from Lemma~\ref{uniq} that $\N(I)\cap S$ is the only full
inductive satisfaction class of $\N(I)$ which is coded\footnote{If
  $K\subseteq M\models\PA$, then we say that a set $A\subseteq K$ is
  \emph{coded} in $M$, if $A=B\cap K$, for some $B\in \Def(M)$.} in
$\N(I+1)$.  Moreover $(\N(I),\N(I)\cap S)$ is an isomorphic copy of
the canonical $I$-extension of $(\N,S_\N)$. It follows that $K_I$ has
a unique full inductive satisfaction class $S_I$ which is coded in
$M$, and with the property that $(K_I,S_I)$ is an isomorphic copy of
the canonical $I$-extension of $(\N,S_\N)$.

To conclude the proof in this case, we must show that if $(J,<)$ is
another linear order and $g\colon(M,K_I)\rightarrow(M,K_J)$ is an
isomorphism, then $(I,<)\iso(J,<)$.  Again using Lemma~\ref{uniq}, we
have that $g(S_I)=S_J$, and hence that $(K_I,S_I)\cong(K_J,S_J)$.
Now, since the results discussed in Section~2 regarding canonical
$I$-models also hold for models of $\SPA$, we can conclude that
$(I,<)\cong (J,<)$.


In order to establish Theorem~\ref{cuts} for arbitrary $M$, we shall
require an additional fact.  A set $S\subseteq M$ is \emph{partial
  inductive satisfaction class} for a model $M\models \PA$ iff
$\seq{\cgod{\vp},a}$ is in $S$ iff $M\models \vp(a)$, for all formulas
$\vp(x)$ and all $a\in M$, and $(M,S)\models\SPA$.

\begin{thm}[Theorem~10.5.2 of \cite{ksbook}]
  \label{prime}
  Every \ct\ \rs\ model $N\models\PA$ has a partial inductive
  satisfaction class $S$ such that $(N,S)$ is the prime model of
  $\Th(N,S)$.
\end{thm}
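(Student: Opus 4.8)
Recall that a countable structure is the prime model of its own theory if and only if it is atomic, i.e.\ every finite tuple realizes an isolated type. A partial inductive satisfaction class certainly exists on $N$ — for any nonstandard $c\in N$, the $N$-definable partial truth predicate $\mathrm{Sat}_c$ for formulas of quantifier depth below $c$ is one, since it is definable (so $(N,\mathrm{Sat}_c)\models\SPA$) and \PA\ proves, separately for each standard $\vp$, that $\mathrm{Sat}_c$ evaluates $\vp$ correctly. The substance of the theorem is to arrange in addition that $(N,S)$ be atomic; since $N$, being recursively saturated, is highly non-atomic as a model of $\Th(N)$, the predicate $S$ will have to carry enough extra information to isolate the type of every element, and the plan is to produce such an $S$ by an iterated resplendence construction.

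In more detail, let $\lan_S$ be the language of \PA\ together with a new unary predicate $S$, and let $\Sigma$ be the recursive $\lan_S$-theory consisting of $\SPA$ together with one axiom $\fo\bar x\,[\,S(\seq{\cgod\vp,\bar x})\iff\vp(\bar x)\,]$ for each formula $\vp$ (with finite tuples coded by single elements as in \PA); the models of $\Sigma$ are exactly the structures $(M,S)$ in which $S$ is a partial inductive satisfaction class. Using that countable recursively saturated models are \emph{chronically} resplendent (see \cite{ksbook}), I would build $S$ in stages. Fix an enumeration $\seq{a_n:n\in\om}$ of $N$ and an increasing chain $\Sigma_0\subseteq\Sigma_1\subseteq\cdots$ of finite sets of $\lan_S$-sentences with the $a_n$ adjoined as parameters, each of which is \emph{realizable in $N$}: there is a partial inductive satisfaction class on $N$ together with which $N$ satisfies $\Sigma_n$ at the indicated parameters. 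At stage $n$ one carries out the usual completeness bookkeeping — a Henkin step is unnecessary, as every element of $N$ is among the $a_i$ — and, crucially, one adjoins a sentence $\theta_n(a_n)$ such that $\theta_n(x)$ \emph{isolates} the complete $\lan_S$-type of $a_n$ over $\Th(N)\cup\Sigma\cup\Sigma_n$, while keeping $\Sigma_{n+1}$ realizable in $N$; chronic resplendence keeps $N$ realizing the theory $\Sigma\cup\Sigma_n$ — still recursive, since $\Sigma_n$ adds only finitely many sentences — after each commitment. Arranging the bookkeeping with a little care, the complete $\lan_S$-theory $\bigcup_n\Sigma_n$ is itself realized in $N$ by some partial inductive satisfaction class $S$ — the point being that the type it assigns to each element of $N$ stays consistent with $\Th(N)$ and can be kept within $\SSy(N)$ — and then, by construction, every element, and hence via coding every finite tuple, of $N$ realizes an isolated type over $\Th(N,S)$. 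Thus $(N,S)$ is atomic, and therefore the prime model of $\Th(N,S)$.

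The main obstacle is the isolating step: showing that a suitable $\theta_n$ can always be found without destroying realizability in $N$. What is needed is a density lemma — for every finite realizable $\Sigma_n$ and every $a\in N$ there is a finite realizable extension of $\Sigma_n$ that forces a complete $\lan_S$-type on $a$ — and proving it is where one must decisively exploit the flexibility of partial inductive satisfaction classes over a recursively saturated model. The intuition is that the axioms of $\Sigma$ constrain $S$ only on standard formula-codes, so that the values of $S$ on nonstandard codes are essentially free; by an overspill argument one can therefore hope to prescribe enough of the intended $S$ on inputs computed from $a$ to decide $\tp_{(N,S)}(a)$ outright, while still respecting the standard-formula axioms and full induction, after which recursive saturation of $N$ makes the prescription realizable. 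Turning this into a proof — especially the verification that the prescribed local modification really extends to a partial inductive satisfaction class realized in $N$ — is the technical heart of the argument, and is the one place where $N$ must genuinely be recursively saturated rather than merely resplendent for recursive theories.
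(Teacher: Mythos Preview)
The paper does not prove this theorem: it is quoted as Theorem~10.5.2 of \cite{ksbook} and invoked as a black box in the proof of Theorem~\ref{cuts}. There is therefore no argument in this paper against which your proposal can be compared.

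Two comments on the proposal itself. First, your opening example is wrong. There is no $N$-definable ``$\mathrm{Sat}_c$'' for nonstandard $c$: any definable subset of $N$ that agrees with truth on all standard sentences is ruled out by Tarski's theorem via the usual diagonal sentence, so no partial inductive satisfaction class can be definable in $N$. The mere existence of a partial inductive satisfaction class on a countable recursively saturated model is already a theorem (obtained, e.g., via resplendence---which you in effect invoke later anyway), not a triviality. This does not damage your plan, since you never actually rely on definability of $S$, but the example should be discarded.

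Second, your overall strategy---reduce primality to atomicity, then build $S$ in stages by a Henkin-style construction, using resplendence of $N$ to keep each finite stage realizable while committing to a formula that isolates the type of the next enumerated element---is a reasonable shape for such an argument and is in the spirit of the constructions in \cite{ksbook}. You have correctly located the crux at the density lemma, and your heuristic (that $S$ is essentially unconstrained on nonstandard codes and can therefore be made to record enough about $a_n$ to pin down its type) points in the right direction. But you have not proved that lemma, and you say as much; without it, what you have is a plausible plan, not a proof.
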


To obtain the full version of Theorem~\ref{cuts}, we now modify the
above proof as follows. Instead of using $(\N,S_\N)$ and its canonical
$I$-extensions, we fix a \ct\ \rs\ $M\models\PA$ and select a prime
partial inductive satisfaction class $S$ for $M$ given by
Theorem~\ref{prime}.  There are $\cont$ many such classes, but this is
not a problem since in the construction $S$ will serve just as an
additional parameter. For a linear order $(I,<)$, we now take
$(M',S')$ to the $I+1$-canonical model of $\Th(M,S)$, and as before,
we take $K(I)$ to be the corresponding cut in $M$ (via an isomorphism
$f\colon M'\rightarrow M$). The rest of the argument is now similar,
but one has to be more careful. In Lemma~\ref{uniq}, $S_0$ and $S_1$
are \emph{full} inductive satisfaction classes, \emph{i.e.}, they
decide the ``truth'' of all formulas in the sense of the model, hence
the conclusion $S_0=S_1$ is easy to get. In the present setting we
cannot assume that $S$ is full.  The task can still be accomplished
with the aid of the more subtle Lemma~10.5.3 of \cite{ksbook} and its
corollary, which says that every \ct\ \rs\ model $M\models\PA$ has a
\ct\ \rs\ \ele\ $N$ such that for every end extension $N'$ of $N$ and
every embedding $f\colon N'\rightarrow N'$ such that $f(M)$ is cofinal
in $M$, $f\harpo M$ is the identity function.

\section{Conjugacy classes}

The automorphism groups of countable saturated structures have been
the subject of much study, and in many cases the conjugacy problem is
known to be Borel complete.  For example, the conjugacy problem for
the automorphism group of the rational linear ordering $(\Q,<)$, the
random graph, and the atomless Boolean algebra are all known to be
Borel complete (for a discussion of these results, see \cite{ces}). It
is shown in \cite{kkk91} that if $M$ is a \ct\ \rs\ model of \PA, then
\[\aut(\Q,<)\;\leq\;\aut(M)\;\leq\;\aut(\Q,\mathord{<})
\]
but $\aut(M)\not\cong\aut(\Q,\mathord{<})$. The group $\aut(M)$ is
known to have continuum many conjugacy classes, but little is known
about their classification.  What is known can be summarized as
follows.  For every $f\in\aut(M)$, let us set
\[\fix(f)\defeq\set{x\in M: f(x)=x},
\textup{ and }\;I_{\rm fix}(f)\defeq\set{x\in M:\fo y\leq x\ f(y)=y}\;.
\]
By a theorem of Smory\'nski \cite{smo82a}, a cut $I$ of a \ct\ \rs\
model of \PA\ is of the form $I_{\rm fix}(f)$ for some $f\in\aut(M)$
if and only if it is closed under exponentiation. Since each
nonstandard model has continuum many pairwise nonisomorphic (or even
not elementarily equivalent) cuts which are closed under
exponentiation, this immediately yields continuum many conjugacy
classes in \rs\ models.

If $M$ is \as\footnote{A \rs\ model $M\models \PA$ is said to be
  \emph{\as}\ iff $\SSy(M)$ is closed under \ar\
  definability. Arithmetic saturation is stronger than recursive
  saturation. Every \ct\ \as\ model has a cofinal extension which is
  \as\ and every \ct\ \as\ model has a cofinal extension which is \rs\
  but not \as.}, then this can be refined further by considering fixed
point sets of the automorphisms.  It is easy to see that $\fix(f)$ is
an elementary submodel of $M$.  Every \ct\ \rs\ model of \PA\ has
continuum many pairwise nonisomorphic elementary submodels, and by a
theorem of Enayat \cite{ena07}, if $M$ is \as\ then for every $K\prec
M$ there is an $f\in\aut(M)$ such that $\fix(f)\cong K$.  However, if
$M$ is not \as, then as shown in \cite{kkk91}, for every $f\in\aut(M)$
we have that $\fix(f)\cong M$.

It is known that for a \ct\ \rs\ model $M$, a cut $I\ee M$ is of the
form $\fix(f)$ for some $f\in\aut(M)$ if and only if $I$ is
\emph{strong} in $M$: for each function $f$ which is coded in $M$ and
such that $I\subseteq\dom(f)$, there is $c>I$ such that for all $i\in
I$, $f(i)>I$ iff $f(i)>c$.  However, we do not know in general which
elementary pairs $(M,K)$ are of the form $(M,\fix(f))$.  We now
establish the following consequence of Theorem \ref{cuts}.


\begin{thm}\label{conjugacy}
  For every \ct\ \rs\ model $M\models\PA$ the conjugacy equivalence
  relation on $\aut(M)$ is Borel complete.
\end{thm}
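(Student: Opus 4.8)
The plan is to reduce the classification problem for pairs $(M,K)$ with $K\ee M$ recursively saturated (which is Borel complete by Theorem~\ref{cuts}) to the conjugacy problem on $\aut(M)$. The obvious obstruction is that a general cut $K\ee M$ need not be of the form $\fix(f)$ or $I_{\rm fix}(f)$ for any automorphism $f$; the discussion preceding the theorem shows that the cuts arising this way are exactly the strong ones (respectively, those closed under exponentiation). So I cannot expect a direct correspondence $(M,K)\mapsto f$ with $\fix(f)=K$. Instead, I would look for a Borel way to attach to each pair $(M,K)$ an automorphism $f_{(M,K)}$ so that the \emph{conjugacy class} of $f_{(M,K)}$ in $\aut(M)$ remembers the isomorphism type of $(M,K)$ — and conversely, isomorphic pairs yield conjugate automorphisms.

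First I would arrange, using Theorem~\ref{cuts} and its proof, a Borel family of pairs $(M,K_I)$ indexed by countable linear orders $I$, where $M$ is a fixed \ct\ \rs\ model and $K_I\ee M$ carries its canonical satisfaction-class structure, such that $(M,K_I)\cong(M,K_J)$ iff $I\cong J$. Since $M$ is recursively saturated and $K_I\ee M$ is itself \rs, both have the same standard system; moreover $K_I$ is a proper cut, so by Smory\'nski's theorem we can choose (after passing to the exponentially-closed cut it generates, or building $M$ \as\ from the outset) an automorphism moving $K_I$ in a prescribed way. The key construction I would aim for: realize $M$ as the union of an $\om$-chain (or a $\Z$-chain) of copies of $K_I$, i.e., take $M'$ to be the canonical $(\Z\cdot(I{+}1))$-model (or similar) of the relevant theory, and let $f$ be the automorphism induced by the obvious shift of the index order. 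Then $f$ is a ``$\Z$-translation'' of $M$ whose lower fixed structure recovers a copy of $(K_I,\text{satisfaction class})$, and the conjugacy class of $f$ in $\aut(M)$ is determined by, and determines, the isomorphism type of that fixed structure.

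The two verifications are: (i) if $I\cong J$ then $f_{(M,K_I)}$ and $f_{(M,K_J)}$ are conjugate in $\aut(M)$ — this follows because an isomorphism $(M,K_I)\cong(M,K_J)$ (which by Lemma~\ref{uniq}-type rigidity also carries the satisfaction classes) can be chosen to intertwine the two shift automorphisms, as both are defined uniformly from the indiscernible skeleton; and (ii) conversely, if $g f_{(M,K_I)} g^{-1}=f_{(M,K_J)}$ for some $g\in\aut(M)$, then $g$ maps the canonical lower piece of the first chain onto that of the second, giving $(K_I,S_I)\cong(K_J,S_J)$, whence $I\cong J$ by the Section~2 facts for $\SPA$ applied as in the proof of Theorem~\ref{cuts}. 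The map $I\mapsto f_{(M,K_I)}$ is Borel for the same bookkeeping reasons as in Theorem~\ref{cuts}, so composing with the Borel reduction $\oiso_\LO\leq_B$ (pairs) gives $\oiso_\LO\leq_B$ (conjugacy on $\aut(M)$), and since $\oiso_\LO$ is Borel complete, so is the conjugacy relation.

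The main obstacle I anticipate is step (ii): extracting the isomorphism type of the cut (with its satisfaction class) from nothing but a conjugating automorphism. The danger is that $g$ conjugates $f_{(M,K_I)}$ to $f_{(M,K_J)}$ without respecting the relevant cuts — e.g.\ it could shift along the $\Z$-direction. I would handle this by exploiting the structure of $\fix(f)$ and the chain of $f$-invariant cuts $\bigcap_n \dom(f^n\upharpoonright\cdot)$: conjugation must carry the entire lattice of $f$-fixed and $f$-periodic substructures of the first model to that of the second, and the \emph{minimal} nontrivial member of that lattice is (a canonical copy of) $K_I$. If the shift-direction ambiguity resists this, the fallback is to rigidify by using an order $\om\cdot(I{+}1)$ rather than $\Z\cdot(I{+}1)$, so that $f_{(M,K_I)}$ has a genuine least block, pinned down as the cut where $f$ first acts nontrivially, namely $I_{\rm fix}(f)$ — which is automatically exponentially closed if we take the exponential closure, matching Smory\'nski's criterion. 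With the direction pinned, $g$ must send least block to least block, and (ii) goes through.
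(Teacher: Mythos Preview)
Your overall framework matches the paper's: build an automorphism of $M$ from a shift on an index set of indiscernibles, and use a fixed-point invariant to pull the conjugacy relation back to Theorem~\ref{cuts}. But the specific index you propose is the wrong one, and that is why your step~(ii) runs into trouble. With $\Z\cdot(I{+}1)$ (or $I\cdot\Z$, if that is what you intend by ``a $\Z$-chain of copies of $K_I$'') and the shift moving every $\Z$-coordinate, \emph{no} generator is fixed, so $\fix(f)$ collapses to the base model and carries no information about $I$; likewise $I_{\rm fix}(f)$ is just the cut below all the generators. Your fallback $\om\cdot(I{+}1)$ does not even support a shift automorphism, since $\om$ has a least element. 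The ``lattice of $f$-invariant cuts'' you gesture at is also degenerate here: with a global $\Z$-shift there are no proper $f$-invariant cuts in the index order at all, so there is nothing for $g$ to be forced to respect.

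The paper's move is to take the index to be $I^{+}=I+\Z$ and define $f'$ by $a_i\mapsto a_i$ for $i\in I$ and $a_i\mapsto a_{i+1}$ for $i\in\Z$. The point is that now $\fix(f_I)$ is \emph{exactly} the Skolem closure of $\{a_i:i\in I\}$, i.e.\ the canonical $I$-cut $K(I)$ from the proof of Theorem~\ref{cuts}. Since conjugation always carries $\fix(f_I)$ onto $\fix(f_J)$, any conjugating $g\in\aut(M)$ is automatically an isomorphism $(M,K(I))\to(M,K(J))$, and Theorem~\ref{cuts} finishes the argument in one line. There is no need to analyse lattices of invariant cuts, no shift-direction ambiguity, and no appeal to Smory\'nski's criterion or arithmetic saturation. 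So the idea you were reaching for is right; the missing ingredient is simply to put the $\Z$-block \emph{on top of} $I$ rather than interleaving or iterating it, so that the fixed-point set does the work for you.
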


Of course, the conjugacy equivalence relation on $\aut(M)$ can be
identified with the isomorphism equivalence relation on the class of
pairs $(M,f)$ where $f$ is an automorphism of $M$.  Hence, it makes
sense to ask whether this relation is Borel complete.

\begin{proof}
  Let $(I,<)$ be a countable linearly ordered set. We will construct
  an ``$I$-canonical'' \au\ $f_I\in\aut(M)$. Let $I^+=(I,<)+(\Z,<)$,
  and let $(M',S')$ be the canonical $I^+$ model of $\Th(M,S)$, where
  $S$ is a partial inductive satisfaction class for $M$ given by
  Theorem~\ref{prime}. Let $\{a_i:i\in I^+\}$ be the generators of
  $(M',S')$. Let $f'$ be the automorphism of $M'$ generated by
  $a_i\mapsto a_i$, for $i\in I$, and $a_i\mapsto a_{i+1}$, for $i\in
  \Z$. Finally, let $f_I$ be the image of $f'$ under a back-and-forth
  isomorphism $g\colon M'\rightarrow M$. Then $\fix(f_I)=K(I)$ (where
  $K(I)$ is the `canonical' I-cut of $M$ defined in the previous
  section).  If $(I,<)$ and $(J,<)$ are \ct\ linearly ordered sets,
  and $f_I$ and $f_J$ are conjugate then $(M, \fix(f_I))\cong
  (M,\fix(f_J))$. By Theorem~\ref{cuts} we must have $(I,<)\cong
  (J,<)$, and the result follows.
\end{proof}

\bibliographystyle{alpha}
\begin{singlespace}
  \bibliography{reductions}
\end{singlespace}

\end{document}